\newtheorem{theorem}{Theorem}[section]
\newtheorem{lemma}[theorem]{Lemma}
\newtheorem{proposition}[theorem]{Proposition}
\newtheorem{conj}[theorem]{Conjecture}
\theoremstyle{definition}
\newtheorem{defin}[theorem]{Definition}
\theoremstyle{remark}
\newtheorem{remark}[theorem]{Remark}
\newtheorem*{remark*}{Remark}
\author{Kinga Nagy}
\address{Department of Geometry, Bolyai Institute, 
University of Szeged, Aradi v\'ertan\'uk tere 1, H-6720 Szeged, Hungary}
\email{kinga1204@live.com}
\author{Viktor V\'{\i}gh}
\address{Department of Geometry, Bolyai Institute, 
University of Szeged, Aradi v\'ertan\'uk tere 1, H-6720 Szeged, Hungary}
\email{vigvik@math.u-szeged.hu}
\title{ Monohedral Tilings of a Convex Disc with a Smooth Boundary}
\date{\today}
\DeclareMathOperator{\conv}{conv}
\DeclareMathOperator{\relint}{relint}
\DeclareMathOperator{\inter}{int}
\DeclareMathOperator{\tc}{tc}
\DeclarePairedDelimiter{\length}{\big\vert}{\big\vert}
\keywords{tiling, monohedral, dissection, convex disc}
\subjclass[2010]{52C20}
\begin{document}

\begin{abstract}
In this paper we give a complete description about normal monohedral tilings of a convex disc with smooth boundary where we have at most three topological discs as tiles.
This result is a far-reaching generalization of the results of Kurusa, Lángi and Vígh \cite{KLV2020}. Some further partial results are proved for non-normal tilings.
\end{abstract}

\maketitle

\section{Introduction}

The compact image of the closed circular disc $\mathcal B^2$ under a continuous function is called a {\it topological disc}. The boundary of
a topological disc is called a {\it Jordan curve}. A finite set of topological discs is called a {\it tiling} of $K$ if the union of the discs is $K$, and their interiors are pairwise disjoint. A tiling is {\it monohedral} if all the tiles are congruent.

In this paper, we deal with the monohedral tilings of convex topological discs. The problem originates from a question attributed to Stein in \cite{croftfalconerguy}: is there a monohedral tiling of $\mathcal B^2$ in which the origin $O$ is in the interior of a tile? (See also \cite{MathOverflow}.)

The simplest monohedral tilings of $\mathcal B^2$ are the so-called {\it rotationally generated} tilings, which can be obtained by rotating a simple curve connecting $O$ and a boundary point of $K$ around $O$ by an angle of $k\cdot 2\pi /n$ for $k=0,1,\ldots,n-1$, see the first tiling of Figure~\ref{CircularDisc}.

\begin{figure}[ht]
{\centering
\includegraphics[width=\textwidth]{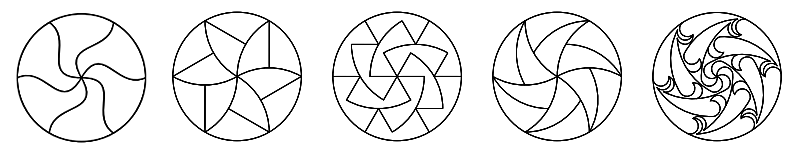}
\caption{\label{CircularDisc} Monohedral tilings of the closed circular disc $\mathcal B^2$. \cite{HaddleyWorsley2016}}
}
\end{figure}

In any rotationally generated tiling of $\mathcal B^2$, the center is contained in the boundary of every tile. This motivates the much studied problem of finding a tiling where one of the tiles does not contain (even on its boundary) the center of the disc. 
Some tilings of this type are well-known and appear in various places \cites{MASS, KoMaL}. Figure \ref{CircularDisc}. presents some constructions with this property.
Haddley and Worsley systematically analyse and construct tilings of this kind in \cite{HaddleyWorsley2016}.
We also mention Goncharov \cite{Goncharov}, which deals with similar problems in normed vector spaces, and Richter \cite{R2008}, which proves an assertion similar to Lemma~\ref{intersection}.
The recently published paper \cite{KLV2020} precisely describes the monohedral tilings of $\mathcal B^2$ in the case of at most three tiles: every tiling with $k$ tiles has a $k$-fold rotational symmetry $(k\leq 3)$. Very recently similar results were obtained by Basit and Lángi \cite{BL21}, they study monohedral tilings of regular $n$-gons.

A similar problem is to consider the monohedral tilings of rectangles, or in a more special case, squares. 
In \cite{Maltby1994}, Maltby proves that the tiles are rectangles in every monohedral tiling of a rectangle with three tiles. More recently, it was shown in \cite{YZZ2016} by Yuan, Zamfirescu and Zamfirescu that there is a unique monohedral tiling of the square with five convex tiles, see Figure~\ref{Nondiff}.

\begin{figure}[ht]
{\centering
\includegraphics[width=\textwidth]{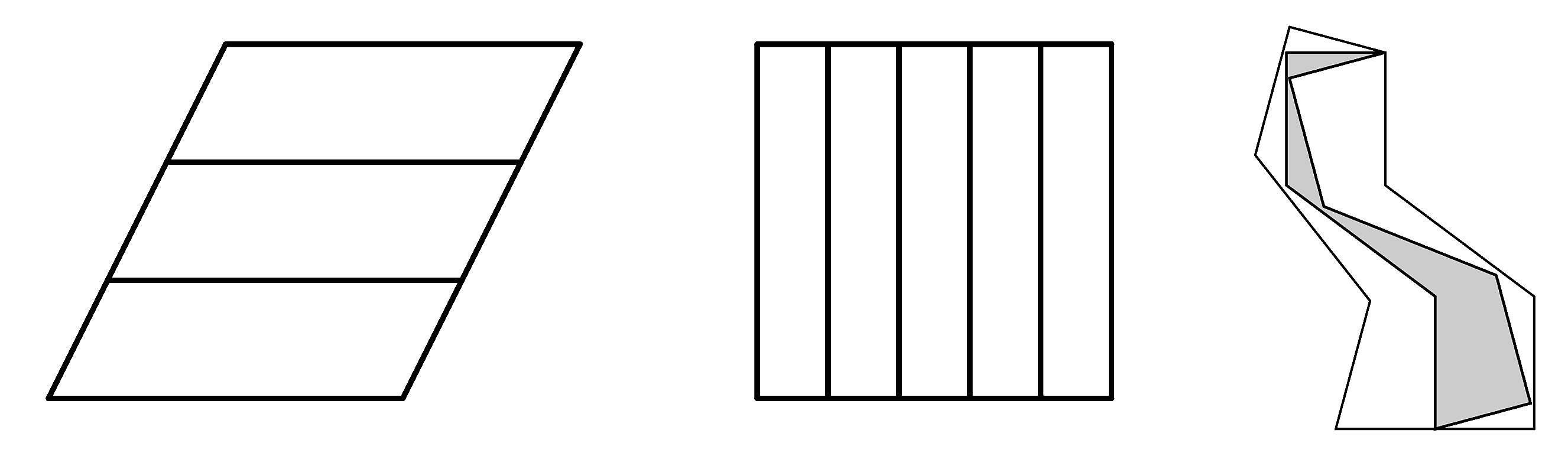}
\caption{\label{Nondiff} Some trivial examples of monohedral tilings of discs with non-differentiable boundaries, and a non-normal tiling (\cite{ALMT}).}
}
\end{figure}

In this paper, we deal with a generalisation of the results in \cite{KLV2020}. Of the distinctive properties of a circular disc, we preserve differentiability and strict convexity. 

In accordance with the definition of \cite[pp. 121]{GSTiling}, we say that a monohedral tiling with topological discs is {\it normal} if the intersection of any two tiles is either empty or connected.
A construction of a non-normal tiling, where the pieces are called {\it Voderberg-tiles}, can be found in \cite{ALMT}. Here, the intersection of the two outer tiles consists of two distinct points, see Figure~\ref{Nondiff}. In \cite{Mann2002}, Mann modifies this tiling in a way that the intersection consists of two disjoint non-degenerate curves, and the middle tile is contained in the interior of the tiled shape.


The main part of the paper pertains to normal tilings, while in Section~\ref{NonNormal}, we present some partial results in the case of non-normal tilings.

Throughout the proof, we denote the tiles of a tiling of $K$ by $\mathcal D_1, \ldots, \mathcal D_n$, the intersection $\mathcal D_i\cap \partial K$ by $\mathcal S_i$, and the isometry between $\mathcal D_i$ and $\mathcal D_j$ by $g_{ij}$,  i.e. $g_{ij}(\mathcal D_i)=\mathcal D_j$.

\begin{figure}[ht]
{\centering
\includegraphics[width=0.75\textwidth]{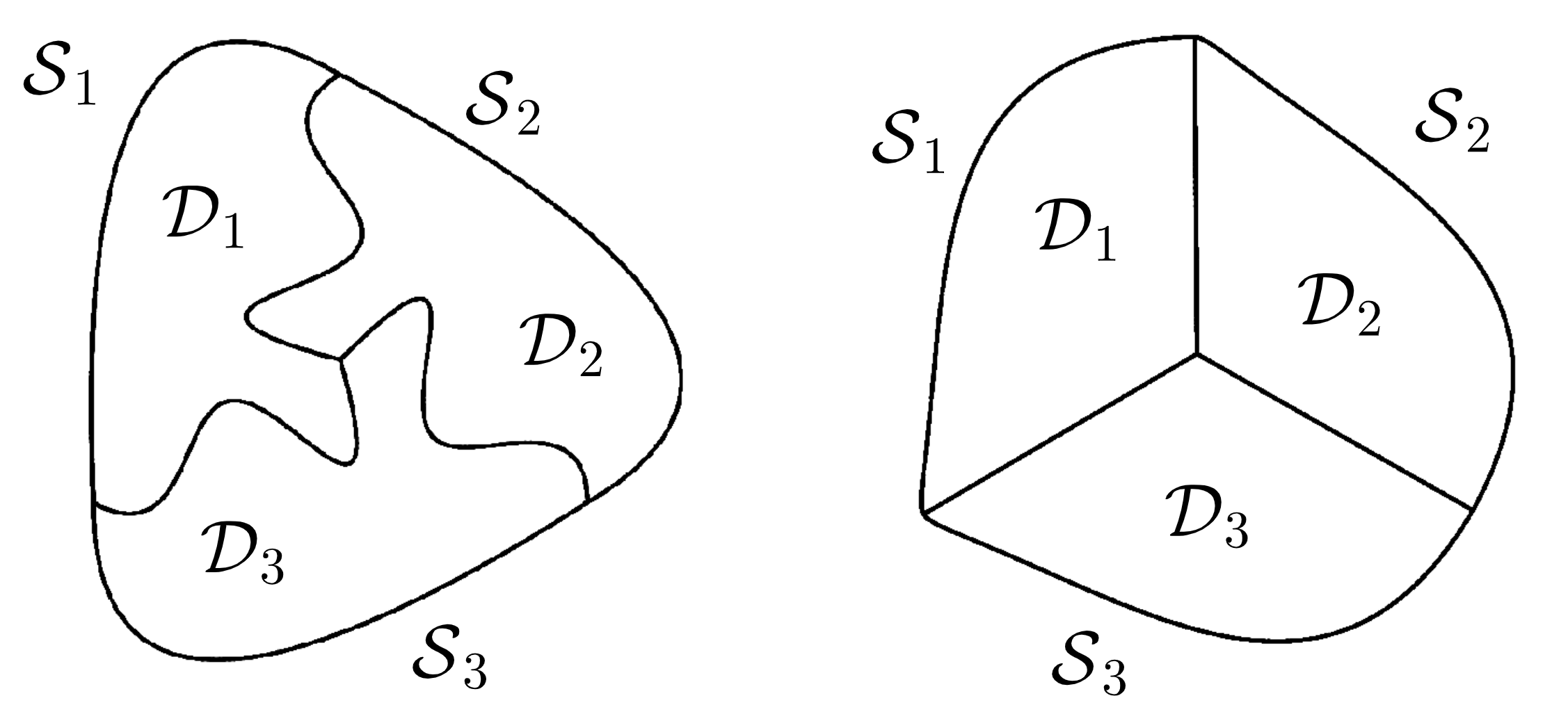}
\caption{\label{Constructions} Some normal monohedral tilings with three tiles.}
}
\end{figure}

Our main results are the following.

\begin{theorem}\label{two}
A smooth, strictly convex disc has a monohedral tiling with two tiles if and only if it has central or axial symmetry. The isometry between the tiles is the corresponding point or line reflection. 
\end{theorem}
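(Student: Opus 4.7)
The \emph{if} direction is an explicit construction. If $K$ is centrally symmetric about a point $O$, any chord of $K$ through $O$ dissects $K$ into two pieces exchanged by the point reflection at $O$. If $K$ is axially symmetric about a line $\ell$, the chord $K \cap \ell$ dissects $K$ into two pieces exchanged by the line reflection in $\ell$. Convexity of $K$ guarantees that the pieces are topological discs.

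For the \emph{only if} direction, write $g := g_{12}$ and $\gamma := \mathcal{D}_1 \cap \mathcal{D}_2$. Because $K$, $\mathcal{D}_1$, $\mathcal{D}_2$ are topological discs with disjoint interiors, $\gamma$ is a simple cross-cut joining two points $A, B \in \partial K$, so $\partial \mathcal{D}_i = \mathcal{S}_i \cup \gamma$ and $\mathcal{S}_1 \cap \mathcal{S}_2 = \{A, B\}$. A preliminary step is to show that the isometry $g \colon \partial \mathcal{D}_1 \to \partial \mathcal{D}_2$ sends the two-point set $\{A, B\}$ to itself, and hence the unordered pair of arcs $\{\gamma, \mathcal{S}_1\}$ to $\{\gamma, \mathcal{S}_2\}$; this uses the smoothness of $\partial K$ at $A$ and $B$ to pin down the corners of the tile.

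Two cases then remain: (I) $g(\gamma) = \gamma$ and $g(\mathcal{S}_1) = \mathcal{S}_2$, or (II) $g(\gamma) = \mathcal{S}_2$ and $g(\mathcal{S}_1) = \gamma$. A plane isometry stabilising the unordered pair $\{A, B\}$ must be the identity, the reflection in the line $AB$, the reflection in the perpendicular bisector of $AB$, or the half-turn about the midpoint of $AB$; translations and glide reflections are excluded because $g^2$ must fix $A$. The identity is excluded since $\mathcal{D}_1 \neq \mathcal{D}_2$, so every remaining candidate is an involution. In Case II we would therefore obtain $\mathcal{S}_2 = g^2(\mathcal{S}_1) = \mathcal{S}_1$, a contradiction, so only Case I survives.

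It remains to analyse Case I. If $g$ fixes both $A$ and $B$, then $g$ is the reflection in the line $AB$; since an isometric involution of an arc fixing both its endpoints is the identity of the arc, $g$ fixes $\gamma$ pointwise, forcing $\gamma$ to lie on $AB$, so $\gamma$ is the chord $AB$ and consequently $K$ is axially symmetric about $AB$. If instead $g$ swaps $A$ and $B$, then $g$ is either the half-turn about the midpoint of $AB$, yielding central symmetry of $K$, or the reflection in the perpendicular bisector of $AB$, yielding axial symmetry. In every sub-case $g$ is the corresponding point or line reflection, as claimed. I expect the main obstacle to be the preliminary step of pinning down $g(\{A, B\}) = \{A, B\}$ from smoothness and strict convexity; with that in hand, the remainder is a routine classification of plane isometries.
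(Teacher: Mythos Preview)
Your outline is clean once the ``preliminary step'' $g(\{A,B\})=\{A,B\}$ is in hand, but that step is essentially the whole theorem, and you have not proved it. Smoothness of $\partial K$ at $A,B$ does \emph{not} by itself pin down the corners of the tile: the cross-cut $\gamma$ may meet $\partial K$ tangentially at $A$, in which case $A$ is a smooth point of $\partial\mathcal D_1$ (and a cusp of $\partial\mathcal D_2$), so it is not an intrinsically distinguished corner; and nothing prevents $\gamma$ itself from carrying many non-differentiable points, so even if $A,B$ are genuine corners they need not be the only ones. In the paper this is exactly Case~1, the long case. The argument there is substantive: first $\length{\mathcal S_1}=\length{\mathcal S_2}$ by equidecomposability, and Lemma~\ref{intersection} forces $g_{21}(\mathcal S_2)$ to overlap $\mathcal S_1$ on $\partial\mathcal D_1$ in positive length; then, under the hypothesis $g_{21}(\mathcal S_2)\neq\mathcal S_1$, one locates $g_{21}(A),g_{21}(B)$ on $\partial\mathcal D_1$ and, in the only nontrivial configuration, iterates $g_{12}$ and $g_{21}$ to produce an infinite family of pairwise disjoint congruent sub-arcs on $\gamma$ (the ``zigzag'' in Figure~\ref{ZigzagTwoTiles}), contradicting Lemma~\ref{finite}. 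Only after this does one arrive at $g_{21}(\mathcal S_2)=\mathcal S_1$, which is your Case~I.

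A smaller point: in your Case~I with $g(A)=B$, the reflection in the perpendicular bisector of $AB$ cannot occur. That reflection preserves each open half-plane bounded by the line $AB$, but by strict convexity $\mathcal S_1$ and $\mathcal S_2$ lie on opposite sides of the chord $AB$, so $g(\mathcal S_1)=\mathcal S_2$ is impossible for this isometry. The paper accordingly lists only the half-turn in this sub-case.
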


\begin{theorem}\label{three}
Let $K$ be a strictly convex disc with a continuously differentiable boundary, and $\{ \mathcal D_1, \mathcal D_2, \mathcal D_3 \}$ be a monohedral, normal tiling of $K$. Assume that the boundary of the tile $\mathcal D_i$ is piecewise differentiable for $i=1,2,3$. 
Then the curves $\mathcal S_i$ $(i=1,2,3)$ are congruent. Moreover, the curves $\mathcal D_i\cap \mathcal D_j$ $(1\leq i <j \leq 3)$ are rotated copies of each other with an angle of $120^\circ$. 
\end{theorem}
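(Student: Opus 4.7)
The plan is to pin down the combinatorial picture of the tiling, identify the congruences $g_{ij}$ with global symmetries of $K$, and then extract a $120^\circ$ rotational symmetry permuting the tiles.

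First I would establish the combinatorial structure. By normality and the strict convexity of $K$, each arc $\mathcal S_i = \mathcal D_i \cap \partial K$ is a single subarc of $\partial K$, so $\partial K$ is partitioned into three arcs meeting at three points $P_{12}, P_{13}, P_{23}$. Each pairwise intersection $\mathcal D_i \cap \mathcal D_j$ is a single Jordan arc by normality, one endpoint of which is $P_{ij}$; a combinatorial count, together with the fact that each $\mathcal S_k$ is a single arc (so a second endpoint of $\mathcal D_i \cap \mathcal D_j$ on $\partial K$ would split some $\mathcal S_k$), forces the remaining endpoints to coincide at a common interior triple point $T$. I would invoke Lemma~\ref{intersection} here to streamline the local analysis of how the three interior arcs fit together at $T$.

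Next I would show that every $g_{ij}$ maps the boundary arc $\mathcal S_i$ onto $\mathcal S_j$, and hence extends to a global symmetry of $K$. The arc $\mathcal S_i$ is smooth and strictly convex (inherited from $\partial K$), bordered by two corners of $\partial \mathcal D_i$ at $P_{ij}$ and $P_{ik}$. Using a local analysis at these corners together with the rigidity of the smooth strictly convex curve $\partial K$, I would argue that necessarily $g_{ij}(\mathcal S_i) = \mathcal S_j$: otherwise some interior arc $\mathcal D_j \cap \mathcal D_k$ would have to contain a congruent copy of an arc of $\partial K$, which combined with strict convexity of $\partial K$ and the fact that this interior arc lies in $\inter K$ yields a contradiction. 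This step simultaneously proves $\mathcal S_1 \cong \mathcal S_2 \cong \mathcal S_3$ and promotes each $g_{ij}$ to a self-isometry of $K$.

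With this in hand, the $g_{ij}$'s generate a subgroup $G$ of the symmetry group of $K$ that acts transitively on the three tiles, so $3 \mid |G|$. A short case split ($G \cong \mathbb Z_3$ or $G \cong S_3$) shows that $G$ contains a $120^\circ$ rotation $\rho$; this $\rho$ cyclically permutes $\mathcal D_1, \mathcal D_2, \mathcal D_3$ and hence cyclically permutes the three curves $\mathcal D_i \cap \mathcal D_j$. Since these three arcs share $T$ as a common endpoint, the centre of $\rho$ must coincide with $T$, and the three interior arcs are rotated copies of one another by $120^\circ$ about $T$, as claimed. The main obstacle in this programme is the second step: ruling out that $g_{ij}$ swaps an interior arc of $\mathcal D_i$ with the boundary arc $\mathcal S_j$, or maps $\mathcal S_i$ across a corner onto a union of pieces of $\partial \mathcal D_j$. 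Both smoothness and strict convexity of $\partial K$ are essential here---the examples in Figure~\ref{Nondiff} show that if either hypothesis is dropped, exotic tilings appear.
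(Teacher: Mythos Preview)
Your programme matches the paper's in outline, but the crucial second step---proving $g_{ij}(\mathcal S_i)=\mathcal S_j$---is where essentially all the work lies, and your proposed justification for it is wrong. You claim that otherwise some interior arc $\mathcal I_{jk}=\mathcal D_j\cap\mathcal D_k$ would contain a congruent copy of an arc of $\partial K$, and that this contradicts strict convexity together with $\mathcal I_{jk}\subset\inter K$. There is no such contradiction: nothing prevents a congruent copy of a piece of $\partial K$ from lying in the interior of $K$, and in the valid tilings of Figure~\ref{Constructions} the interior arcs \emph{are} typically assembled from pieces congruent to arcs of $\partial K$. You also assume the endpoints $P_{ij},P_{ik}$ of $\mathcal S_i$ are genuine corners of $\partial\mathcal D_i$, but $\mathcal I_{ij}$ may meet $\partial K$ tangentially, making $P_{ij}$ smooth on $\partial\mathcal D_i$ and a cusp on $\partial\mathcal D_j$, so no corner-matching argument is available either. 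The paper handles this step by a lengthy case split: first on whether $g_{12}$ is orientation-preserving or -reversing, and then on where the endpoints of $g_{12}(\mathcal S_1)$ fall on $\partial\mathcal D_2$ (Subcases~1.a--1.e and the glide-reflection analysis in~2.a). Each illicit subcase is eliminated by counting the non-differentiable points on the three tile boundaries, distinguishing ordinary corners from cusps and positive from negative, and showing the counts cannot all agree. Only the configurations with $g_{12}(\mathcal S_1)=\mathcal S_2$ and $g_{12}$ a rotation fixing the triple point $M$, or $g_{12}$ a line reflection, survive.

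There is a second gap in your third step. Even granting $g_{ij}(\mathcal S_i)=\mathcal S_j$, it does not follow that $g_{ij}$ is an isometry of $K$: two arcs of a merely $C^1$ strictly convex curve can be congruent without the congruence extending to the whole curve. Nor do the various $g_{ij}$ automatically compose into a group acting on the tiles, since nothing forces $g_{23}\circ g_{12}=g_{13}$. The paper never invokes the symmetry group of $K$; instead, having identified $g_{12}$ as a rotation about $M$, it argues directly that the interior angle at $M$ in each tile is $120^\circ$ and that the three $\mathcal I_{ij}$ are related by the $120^\circ$ rotation about $M$.
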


\section{Notations and Tools}

In this paper, by a {\it simple curve} we mean a continuous map of an interval $[a, b]$ to the Euclidean plane. By the length of a differentiable curve with parametrisation $\mathbf r:[a,b]\to\mathbb{R}^2$, we mean the quantity $\int_a^b \vert \dot{\mathbf r}(t)\vert dt$, and denote it by $\length {\cdot}$. We say that a curve $\gamma$ is {\it non-degenerate} if $\length{\gamma}>0$.
We denote the boundary of a topological disc $K$ by $\partial K$, and its convex hull by $\conv K$. 
For a simple curve $\gamma$, we use the notation $\relint \gamma$ for its relative interior, that is, the set of points obtained from $\gamma$ by omitting its endpoints.

\begin{defin}
Let $K$ be a topological disc with a piecewise differentiable boundary, and $x\in \partial K$ a point on the boundary. 
Consider a parametrisation of $\partial K$ in a neighbourhood of $x$ such that $\mathbf r (0)=x$.
We say that $x$ is a {\it cusp} on the boundary of $K$ if the left and right tangent vectors $\dot{\mathbf r}_-(0)$ and $\dot{\mathbf r}_+(0)$ are parallel and have opposite directions. If the left tangent vector $\dot{\mathbf r}_-(0)$ points away from $K$, we say that $x$ is a {\it positive} cusp, otherwise we say it's a {\it negative} cusp.

We say that a non-differentiable point is {\it ordinary} if it is not a cusp. By the {\it angle} of an ordinary non-differentiable point, we mean the directed angle between the left and right tangents (in that order). An ordinary non-differentiable point is {\it positive} if its angle is positive in a parametrisation of the boundary, and {\it negative} otherwise.
\end{defin}

For further notation, we refer to \cite{DifferentialGeometry}.

\begin{defin}[\cite{KLV2020}]
A {\it multicurve} is a finite family of simple curves, called the {\it members of the multicurve}, which are parameterized on
non-degenerate closed finite intervals, and any point of the plane belongs to
at most one member, or it is the endpoint of exactly two members. If $\mathcal F$ and $\mathcal G$ are multicurves, $\bigcup \mathcal F=\bigcup \mathcal G$, and every member of $\mathcal F$ is the union of some members of $\mathcal G$, we say that $\mathcal G$ is a {\it partition} of $\mathcal F$. 
\end{defin}

\begin{defin}[\cite{KLV2020}]
Let $\mathcal F$ and $\mathcal G$ be multicurves. If there are partitions $\mathcal F'$ and
$\mathcal G'$ of $\mathcal F$ and $\mathcal G$, respectively, and a bijection $f: \mathcal F' \to \mathcal G'$, such that $f(\mathcal C)$ is
congruent to $\mathcal C$ for all $\mathcal C\in \mathcal F'$, we say that $\mathcal F$ and $\mathcal G$ are {\it equidecomposable.}
\end{defin}

\begin{lemma}[\cite{KLV2020}, p. 8, Corollary 2.14.]\label{equidecomp}
If $\mathcal{F}$ and $\mathcal{G}$ are equidecomposable,
and their subfamilies $\mathcal{F}' \subseteq \mathcal{F}$
and $\mathcal{G}' \subseteq \mathcal{G}$ are equidecomposable,
then $\mathcal{F} \setminus \mathcal{F}'$ and $\mathcal{G} \setminus \mathcal{G}'$
are equidecomposable.\end{lemma}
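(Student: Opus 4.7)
The claim is a cancellation-type lemma for equidecomposability of multicurves: from $\mathcal{F}\sim\mathcal{G}$ and $\mathcal{F}'\sim\mathcal{G}'$ with $\mathcal{F}'\subseteq\mathcal{F}$ and $\mathcal{G}'\subseteq\mathcal{G}$, we must deduce $\mathcal{F}\setminus\mathcal{F}'\sim\mathcal{G}\setminus\mathcal{G}'$. My plan is to first align the two given equidecompositions by common refinements so that the four partitions become mutually compatible, and then to construct the required congruence bijection via a Banach--Schroeder--Bernstein style alternating-chain argument.

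Fix partitions $\mathcal{F}_1,\mathcal{G}_1$ and a congruence bijection $\phi\colon\mathcal{F}_1\to\mathcal{G}_1$ witnessing $\mathcal{F}\sim\mathcal{G}$, and $\mathcal{F}_2,\mathcal{G}_2,\psi$ witnessing $\mathcal{F}'\sim\mathcal{G}'$. Since each piece of $\mathcal{F}_1$ lies inside a single member of $\mathcal{F}$, it lies either entirely in $\mathcal{F}'$ or entirely in $\mathcal{F}\setminus\mathcal{F}'$; classify the pieces as $A$ and $B$ respectively, and similarly split $\mathcal{G}_1=C\sqcup D$. I refine $\mathcal{F}_1$ by the cut points of $\mathcal{F}_2$ on $\mathcal{F}'$ and transport these cuts to $\mathcal{G}_1$ through the isometries of $\phi$; then I refine $\mathcal{G}_2$ by the new cut structure of $\mathcal{G}_1|_{\mathcal{G}'}$ and transport back through $\psi$. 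After iterating until the process stabilises, I obtain refined partitions with $\widetilde{\mathcal{F}}_1|_{\mathcal{F}'}=\widetilde{\mathcal{F}}_2$ and $\widetilde{\mathcal{G}}_1|_{\mathcal{G}'}=\widetilde{\mathcal{G}}_2$, together with refined bijections $\tilde\phi\colon A\cup B\to C\cup D$ and $\tilde\psi\colon A\to C$, both congruence-preserving.

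For each piece $b\in B$, if $\tilde\phi(b)\in D$ then $b$ is matched to $\tilde\phi(b)$ directly; otherwise $\tilde\phi(b)\in C$, and I follow the alternating chain $b\mapsto\tilde\phi(b)\mapsto\tilde\psi^{-1}(\tilde\phi(b))\mapsto\tilde\phi(\tilde\psi^{-1}(\tilde\phi(b)))\mapsto\cdots$ until $\tilde\phi$ first lands in $D$. Because the piece system is finite and both $\tilde\phi,\tilde\psi$ are injective, a revisit $a_k=a_0$ would force $\tilde\phi(a_{k-1})=\tilde\phi(b)$ and hence $a_{k-1}=b$, contradicting $b\in B$ and $a_{k-1}\in A$; consequently every chain terminates in $D$ after finitely many steps. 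The composition of the isometries along the chain is a single congruence from $b$ to its terminal piece in $D$, and collecting these chain matches produces a congruence bijection $B\to D$, which, combined with the identifications $\mathcal{F}\setminus\mathcal{F}'=\bigcup B$ and $\mathcal{G}\setminus\mathcal{G}'=\bigcup D$, yields the desired equidecomposition.

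The principal obstacle is the termination of the alignment iteration in the first step: repeatedly pushing cut points back and forth between $\mathcal{F}$ and $\mathcal{G}$ could in principle spawn an infinite cascade of new cuts (e.g.\ if the isometries involved generate irrational orbits). The remedy is to avoid insisting on full alignment: instead, take a single common refinement $\mathcal{F}^*$ of $\mathcal{F}_1|_{\mathcal{F}'}$ and $\mathcal{F}_2$ on the $\mathcal{F}$-side, push it forward through both $\tilde\phi$ and $\tilde\psi$ to obtain two possibly different partitions of $\mathcal{G}'$, and run the chain argument on the sub-pieces of this enlarged finite system, using the two $\mathcal{G}'$-partitions to define the alternation. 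A careful verification that the resulting sub-pieces assemble into a valid multicurve---disjoint interiors and endpoints meeting in pairs---completes the proof.
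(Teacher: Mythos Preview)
The paper does not prove this lemma itself; it is quoted from \cite{KLV2020} (Corollary~2.14 there) and used as a black box, so there is no in-paper argument to compare your proposal against. What follows is an assessment of the proposal on its own terms.

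Your Banach--Schr\"oder--Bernstein strategy is a natural idea, and you correctly locate the obstacle: the back-and-forth refinement needed to align the two witnessing partitions need not stabilise after finitely many steps. Your proposed remedy, however, does not close this gap. After taking a single common refinement $\mathcal F^{*}$ on the $\mathcal F$-side and pushing it through $\phi$ and $\psi$, you obtain two generally different partitions $P_\phi$ and $P_\psi$ of $\mathcal G'$. To continue a chain from some $b\in B$ with $\phi(b)\in C$ you must apply $\psi^{-1}$, but $\phi(b)$ is a $P_\phi$-piece, not a $P_\psi$-piece, so it has to be subdivided along $P_\psi$ before $\psi^{-1}$ applies. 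The resulting sub-pieces land back in $A$, and applying $\phi$ produces further sub-pieces that are again not aligned with $P_\psi$, forcing another subdivision, and so on. Nothing in your sketch bounds the depth of this cascade, so you have not exhibited a \emph{finite} partition of $\mathcal F\setminus\mathcal F'$ and $\mathcal G\setminus\mathcal G'$ realising the equidecomposition---and finiteness is exactly what the definition demands. The closing line ``a careful verification \ldots\ completes the proof'' is precisely where the substantive argument is still missing.

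Two further remarks. First, your finiteness argument for a single chain (``a revisit $a_k=a_0$ would force $a_{k-1}=b$'') presupposes that the pieces along the chain belong to one fixed finite partition; once you allow the successive subdivisions above, that premise evaporates. Second, in \cite{KLV2020} the statement is recorded as a corollary of earlier structural results on equidecomposable multicurves rather than via a direct chain construction; if you wish to pursue the combinatorial route, you will need an additional finiteness ingredient specific to planar arcs to force termination.
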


The following two technical lemmas are also from \cite{KLV2020}. 

\begin{lemma}[\cite{KLV2020}, p. 4, Lemma 2.1.]\label{finite}
Let $\Gamma$ be a Jordan curve and $\mathcal C$ be a simple curve.
Then $\Gamma$ contains only finitely many congruent copies of $\mathcal C$ that are mutually disjoint, apart from possibly their endpoints.
\end{lemma}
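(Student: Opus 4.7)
My plan is to argue by contradiction, pulling back the congruent copies to arcs on $S^1$ via a Jordan parametrisation of $\Gamma$, and exploiting the fact that $S^1$ accommodates only finitely many essentially disjoint arcs of any fixed positive length. Concretely, I would fix a homeomorphism $\gamma\colon S^1\to\Gamma$ and set $d:=\operatorname{diam}\mathcal C$. The case $d=0$ falls outside the scope of the lemma, since a single point admits infinitely many mutually disjoint copies on any Jordan curve, so I assume $d>0$. For any congruent copy $\mathcal C_i\subset\Gamma$ of $\mathcal C$, the preimage $A_i:=\gamma^{-1}(\mathcal C_i)$ is a compact connected subset of $S^1$, and hence is either a single point, a proper closed arc, or the whole circle. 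The first possibility is ruled out because $\operatorname{diam}\mathcal C_i=d>0$, and the third cannot occur for more than one index $i$, since the copies are required to be essentially disjoint. So, after discarding at most one copy, each $A_i$ is a proper closed arc.

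Now assume toward contradiction that there are infinitely many such copies $\mathcal C_1,\mathcal C_2,\ldots$. Then the arcs $A_i$ have pairwise disjoint relative interiors in $S^1$, which gives $\sum_i |A_i|\le 2\pi$ and forces $|A_i|\to 0$. Because $\gamma$ is uniformly continuous on the compact space $S^1$, it follows that $\operatorname{diam}\gamma(A_i)\to 0$. On the other hand, $\operatorname{diam}\gamma(A_i)=\operatorname{diam}\mathcal C_i=d$ for every $i$, and this is the contradiction I need.

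The only step that requires any real care is the identification of $A_i$ as a closed arc, and the key observation there is simply that $\mathcal C_i$, being a continuous image of a closed interval, is connected, whence $\gamma^{-1}(\mathcal C_i)$ is a connected subset of $S^1$. Once this is settled, the rest is a direct appeal to compactness, uniform continuity, and the finite total length of $S^1$; in particular, no rectifiability hypothesis on $\Gamma$ or on $\mathcal C$ is needed, which is what makes the statement useful later on when the ambient Jordan curve is only assumed to be the boundary of a topological disc.
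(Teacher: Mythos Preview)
The paper does not supply a proof of this lemma; it is quoted from \cite{KLV2020} without the argument being reproduced, so there is nothing in the present paper to compare your proposal against.

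Your proof is correct and self-contained. Pulling the copies back along a Jordan parametrisation to essentially disjoint closed arcs on $S^1$, using $\sum_i|A_i|\le 2\pi$ to force $|A_i|\to 0$, and then invoking uniform continuity of $\gamma$ to contradict $\operatorname{diam}\mathcal C_i=d>0$ is the natural route and requires no rectifiability of $\Gamma$, as you note. One minor wording point: in your last paragraph you justify that $A_i=\gamma^{-1}(\mathcal C_i)$ is connected by observing that $\mathcal C_i$ is connected; this step uses the continuity of $\gamma^{-1}$ (equivalently, that $\gamma$ is a homeomorphism), not merely the continuity of $\gamma$. Since you fixed $\gamma$ as a homeomorphism at the outset this is of course fine, but the phrasing ``whence $\gamma^{-1}(\mathcal C_i)$ is connected'' would read more cleanly if you said ``since $\gamma^{-1}$ is a homeomorphism'' rather than appealing only to connectedness of $\mathcal C_i$. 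Also, the case $A_i=S^1$ in fact cannot occur at all: a congruent copy of the arc $\mathcal C$ is homeomorphic to a closed interval and hence cannot coincide with the Jordan curve $\Gamma$, so you may drop the ``after discarding at most one copy'' caveat.
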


\begin{remark*}
Obviously, Lemma~\ref{finite} also holds if $\Gamma$ is a simple curve.
\end{remark*}

\begin{lemma}[\cite{KLV2020}, p. 4, Lemma 2.3.]\label{threebasis}
Let $\{ \mathcal D_1, \mathcal D_2, \mathcal D_3 \}$ be a tiling of the topological disc $\mathcal D$. Assume that $\mathcal S_i$ is a nondegenerate simple curve for every $i=1,2,3$.
Then $\mathcal D_1 \cap \mathcal D_2 \cap \mathcal D_3$ is a singleton $\{ M \}$,
and for any $i \neq j$, $\mathcal D_i \cap \mathcal D_j$ is
a simple curve connecting $M$ and a point in $\partial \mathcal D$.
\end{lemma}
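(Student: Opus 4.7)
My plan is to translate the statement into facts about three inner-boundary arcs of the tiles and then use planar topology to pin down the triple intersection. First, I would analyse the combinatorics on $\partial\mathcal D$: since $\mathcal S_1,\mathcal S_2,\mathcal S_3$ cover the Jordan curve $\partial\mathcal D$ and have pairwise disjoint relative interiors, they can only meet at their endpoints. A short case analysis (two of them sharing both endpoints would cover all of $\partial\mathcal D$ and force the third to have empty relative interior, hence be degenerate) shows that pairwise they meet at exactly three distinct points $P_{12},P_{13},P_{23}$, with $P_{ij}$ the common endpoint of $\mathcal S_i$ and $\mathcal S_j$. For each $i$, write $\alpha_i$ for the arc of the Jordan curve $\partial\mathcal D_i$ complementary to $\mathcal S_i$; then $\alpha_i$ is a simple arc from $P_{ij}$ to $P_{ik}$ with $\relint\alpha_i\subset\inter\mathcal D$.

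A standard open-neighbourhood argument shows $\mathcal D_i\cap\mathcal D_j\subseteq\partial\mathcal D_i\cap\partial\mathcal D_j$, and combining this with the boundary decomposition $\partial\mathcal D_i=\mathcal S_i\cup\alpha_i$ and the combinatorics above yields the clean identity $\beta_{ij}:=\mathcal D_i\cap\mathcal D_j=\alpha_i\cap\alpha_j$; in particular $T:=\mathcal D_1\cap\mathcal D_2\cap\mathcal D_3=\alpha_1\cap\alpha_2\cap\alpha_3$. Each $\alpha_i$ is covered by the closed sets $\beta_{ij}$ and $\beta_{ik}$, and the distinguished endpoints $P_{ij},P_{ik}$ lie in different pieces, so connectedness of the arc $\alpha_i$ forces $\beta_{ij}\cap\beta_{ik}\neq\emptyset$ and hence $T\neq\emptyset$.

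The main obstacle is proving $|T|=1$. I would argue by contradiction: assume $M_1\neq M_2$ both lie in $T$, chosen so that the sub-arc $\gamma_1\subset\alpha_1$ joining them contains no further triple point. Then the closed sets $\beta_{12}\cap\gamma_1$ and $\beta_{13}\cap\gamma_1$ cover $\gamma_1$ and meet only at $\{M_1,M_2\}$, so removing these two points decomposes the connected open arc $\relint\gamma_1$ into two disjoint closed subsets, forcing one to be empty. Hence $\gamma_1$ lies entirely in one of $\beta_{12},\beta_{13}$, say $\beta_{12}$. Running the analogous argument on $\alpha_2,\alpha_3$ produces sub-arcs $\gamma_2,\gamma_3$ each lying in exactly one $\beta_{ij}$. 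I would then split into cases on which $\beta_{ij}$ contains each $\gamma_i$ and apply the Jordan curve theorem to an appropriate union $J$ of two of these sub-arcs. The simple-connectedness of the topological disc $\mathcal D_k$ (whose boundary meets $J$ in at most $\{M_1,M_2\}$) would then prevent $\mathcal D_k$ from straddling both sides of $J$, contradicting the prescribed boundary arrangement of $P_{12},P_{13},P_{23}$ on $\partial\mathcal D$.

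Once $|T|=1$ is established, the arc structure of each $\beta_{ij}$ follows immediately from the same connectedness considerations: on the simple arc $\alpha_i$, the closed sets $\beta_{ij}$ and $\beta_{ik}$ cover it and meet only at $M$, so each must be a single sub-arc --- the one from $M$ to $P_{ij}$, respectively $P_{ik}$ --- yielding a simple curve connecting $M$ to a point of $\partial\mathcal D$. The trickiest step throughout is the Jordan-curve separation in the uniqueness argument, where one must carefully invoke the topological-disc structure of the tiles to rule out configurations with multiple triple points.
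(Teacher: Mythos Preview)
The paper does not give its own proof of this lemma: it is quoted directly from \cite{KLV2020} (Lemma~2.3 there) and used as a black box, so there is no in-paper argument to compare your proposal against.

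As for the proposal itself, your route is the natural one and lines up with how such statements are typically proved. The boundary combinatorics, the identification $\mathcal D_i\cap\mathcal D_j=\alpha_i\cap\alpha_j$, and the non-emptiness of $T$ via connectedness of $\alpha_i$ are all clean. The one place that is genuinely incomplete is the uniqueness step: you pick $M_1,M_2\in T$ so that the sub-arc $\gamma_1\subset\alpha_1$ between them contains no further triple point, but $T$ is a priori only a closed subset of the arc $\alpha_1$ and need not be discrete, so such a choice is not automatic. You would first want to argue that $T$ is totally disconnected (e.g.\ because a nondegenerate arc common to $\alpha_1,\alpha_2,\alpha_3$ already violates the topological-disc structure), or else rephrase the argument to take $M_1,M_2$ as the endpoints of a maximal complementary interval of $T$ in $\alpha_1$. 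After that, your Jordan-curve separation sketch is the right idea, but as you yourself note it needs the case analysis spelled out; in particular you should say explicitly which two of $\gamma_1,\gamma_2,\gamma_3$ concatenate to a Jordan curve $J$ and why the remaining tile has points on both sides of $J$ while meeting $J$ only at $\{M_1,M_2\}$.
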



\begin{lemma}\label{intersection}
If $n\geq 2$, there exist indices $i\neq j$ such that $\length{g_{ij}(\mathcal S_i)\cap\mathcal S_j}>0$.
\end{lemma}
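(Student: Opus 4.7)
My plan is a proof by contradiction: suppose $\length{g_{ij}(\mathcal{S}_i)\cap\mathcal{S}_j}=0$ for every pair $i\neq j$.

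Since $\mathcal{S}_j=\partial\mathcal{D}_j\cap\partial K$, the contradiction hypothesis forces each isometric image $g_{ij}(\mathcal{S}_i)\subseteq\partial\mathcal{D}_j$ to meet $\partial K$ only in a null set, and hence to lie, up to measure zero, in the interior edges $\partial\mathcal{D}_j\setminus\mathcal{S}_j$, whose total length equals $P-\length{\mathcal{S}_j}$ (where $P$ is the common tile perimeter). Because $g_{ij}$ is length-preserving, this yields
\[
\length{\mathcal{S}_i}+\length{\mathcal{S}_j}\le P\qquad (i\neq j).
\]
Summing over ordered pairs and combining with the standard double-count identity $nP=\length{\partial K}+2E$ (where $E$ is the total length of the interior edges) produces the necessary condition $\length{\partial K}\le P$.

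To transform this bound into a genuine contradiction I would iterate. By the pigeonhole principle each curve $g_{ij}(\mathcal{S}_i)$ contains a non-degenerate subarc $\gamma$ lying on a single interior edge $\mathcal{D}_j\cap\mathcal{D}_k$, and hence simultaneously on $\partial\mathcal{D}_k$. Applying isometries of the form $g_{k\ell}$ produces further congruent copies of $\gamma$, which under the contradiction hypothesis again fall on interior edges; iterating generates an infinite family of congruent copies of $\gamma$ distributed over the $n$ Jordan curves $\partial\mathcal{D}_1,\ldots,\partial\mathcal{D}_n$. A second pigeonhole step then places infinitely many of them on a single $\partial\mathcal{D}_j$, in contradiction with Lemma~\ref{finite}.

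The main obstacle is verifying that successive iterates are pairwise essentially disjoint rather than looping back to previously constructed arcs. Here the strict convexity and smoothness of $\partial K$ are decisive: the subarc $\gamma$, being isometric to a piece of the strictly convex, smooth curve $\partial K$, admits only rigid placements inside any given Jordan curve. Since only finitely many such placements are compatible with compositions drawn from the finite family $\{g_{ij}\}$, the iteration genuinely produces new copies at each stage, and Lemma~\ref{finite} delivers the desired contradiction.
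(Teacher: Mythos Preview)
Your iteration step contains a genuine gap. You need the successive images of $\gamma$ to be \emph{pairwise disjoint} (apart from endpoints) in order to invoke Lemma~\ref{finite}, and you need them to be infinitely many. Your justification, however, is internally inconsistent: you write that $\gamma$ ``admits only rigid placements'' and that ``only finitely many such placements are compatible with compositions drawn from the finite family $\{g_{ij}\}$,'' and then conclude that ``the iteration genuinely produces new copies at each stage.'' These two assertions pull in opposite directions: if only finitely many placements of $\gamma$ occur along the boundaries, the iteration \emph{must} loop, not continue indefinitely. Conversely, nothing you have said rules out that the group generated by the $g_{ij}$ is finite (think of all $g_{ij}$ being reflections through a common point), in which case the orbit of $\gamma$ is finite and Lemma~\ref{finite} is never triggered. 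Even granting infinitely many distinct images, you still have not argued disjointness. The preliminary inequality $\length{\partial K}\le P$ is fine but, as you note yourself, not a contradiction; the real work was supposed to be in the iteration, and that part does not go through as written.

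The paper's argument avoids all of this by exploiting convexity directly, in one line. Since $K$ is strictly convex, each $\mathcal{S}_i$ lies on $\partial\conv\mathcal{D}_i$; hence $g_{i1}(\mathcal{S}_i)\subseteq\partial\conv\mathcal{D}_1$ for every $i$. If all the arcs $\mathcal{S}_1,\,g_{21}(\mathcal{S}_2),\ldots,g_{n1}(\mathcal{S}_n)$ had pairwise null overlap on $\partial\conv\mathcal{D}_1$, their total length $\sum_i\length{\mathcal{S}_i}=\length{\partial K}$ would be at most $\length{\partial\conv\mathcal{D}_1}$. But $\conv\mathcal{D}_1\subsetneq K$ (as $n\ge2$), and perimeter is strictly monotone on convex bodies, so $\length{\partial\conv\mathcal{D}_1}<\length{\partial K}$, a contradiction. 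The point you were missing is that the images $g_{i1}(\mathcal{S}_i)$ do not merely sit somewhere on $\partial\mathcal{D}_1$: they all sit on the \emph{convex} boundary $\partial\conv\mathcal{D}_1$, whose length is strictly smaller than $\length{\partial K}$.
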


\begin{remark*}
If we have $\length{ g_{ij}(\mathcal S_i)\cap\mathcal S_j}>0$, then taking the image of this intersection under $g_{jk}$ we also get $\length{g_{jk} g_{ij}(\mathcal S_i)\cap g_{jk} (\mathcal S_j)} =\length{g_{ik}(\mathcal S_i)\cap g_{jk} (\mathcal S_j)}>0$.
\end{remark*}

\begin{proof}
By the remark above, it is enough to consider the relationship between the curves $\mathcal S_1, g_{21}(\mathcal S_2),\ldots, g_{n1}(\mathcal S_n)$.
Suppose for contradiction that these curves are disjoint apart from possibly their endpoints. 
Since $\mathcal S_i\subseteq \partial\conv \mathcal D_i$, we have $g_{i1}(\mathcal S_i)\subseteq \partial\conv\mathcal D_1$. 
This yields that $\length{\mathcal S_1}+\ldots+\length{\mathcal S_n}=\length{\partial K} \leq \length{\partial\conv \mathcal D_1}$.
This is a contradiction, as $\conv \mathcal D_1\subsetneq K$, thus the perimeter of $K$ is strictly greater than the perimeter of $\conv \mathcal D_1$.
\end{proof}

\section {Proof of Theorem~\ref{two}.}


The {\it if} part of the statement is obvious. 
In the proof of the {\it only if} part, we denote the curve $\mathcal D_1 \cap \mathcal D_2$ by $\mathcal I$, and its two endpoints by $A$ and $B$, which are on $\partial K$.
Since $\mathcal D_1$ and $\mathcal D_2$ are congruent, their boundaries, $\mathcal S_1 \cup \mathcal I$ and $\mathcal S_2 \cup \mathcal I$ are equidecomposable, and hence by Lemma~\ref{equidecomp}, $\mathcal S_1$ and $\mathcal S_2$ are equidecomposable as well. As $\partial K$ is smooth, $\mathcal S_1$ and $\mathcal S_2$ are of equal length.
By Lemma~\ref{intersection}, the arc length of $\mathcal S_1\cap g_{21}(\mathcal S_2)$ is strictly positive. 
We consider two cases.

\noindent {\bf Case 1} $\; \mathcal S_1 \neq g_{21}(\mathcal S_2)$

Let's consider the position of the points $g_{21}(A)$ and $g_{21}(B)$ on $\partial\mathcal D_1$.

$g_{21}(A),\, g_{21}(B)\in \mathcal I$ yields $\mathcal S_1\subsetneq g_{21}(\mathcal S_2)$, which is a contradiction, since $\length{\mathcal S_1}=\length{\mathcal S_2}$. 

$g_{21}(A),\, g_{21}(B)\in \mathcal S_1$ either yields $\mathcal S_1\supsetneq g_{21}(\mathcal S_2)$, which is a contradiction similarly to the previous case, or $\mathcal I \subseteq g_{21}(\mathcal S_2)$. However, $\mathcal I \subseteq g_{21}(\mathcal S_2)$ means that $\mathcal D_1$ is stricly convex, while $\mathcal D_2$ is not, a contradiction.

The only remaining case is when one of the points $g_{21}(A)$ and $g_{21}(B)$ is an interior point of $\mathcal I$, and the other is an interior point of $\mathcal S_1$. 
Assume without loss of generality that $g_{21}(A)\in \relint \mathcal S_1$ and $g_{21}(B)\in \relint \mathcal I$. This means that $g_{21}(A)$ is differentiable on the boundary of $\mathcal D_1$, thus $A=g_{12}(g_{21}(A))$ is differentiable on $\partial \mathcal D_2$. 

If $g_{21}$ is orientation-preserving, then $A\in \relint g_{21}(\mathcal S_2)$. This yields that $A$ is differentiable on both $\partial \mathcal D_1$ and $\partial \mathcal D_2$, a contradiction.

If $g_{21}$ is orientation-reversing, then $B\in \relint (g_{21}(\mathcal S_2))$.
Let $\mathbf r:[0,1]\to \mathcal I$ be a continuous parametrisation of $\mathcal I$ such that $\mathbf r(0)=A$ and $\mathbf r(1)=B$. 

\begin{figure}[ht]
{\centering
\includegraphics[width=0.75\textwidth]{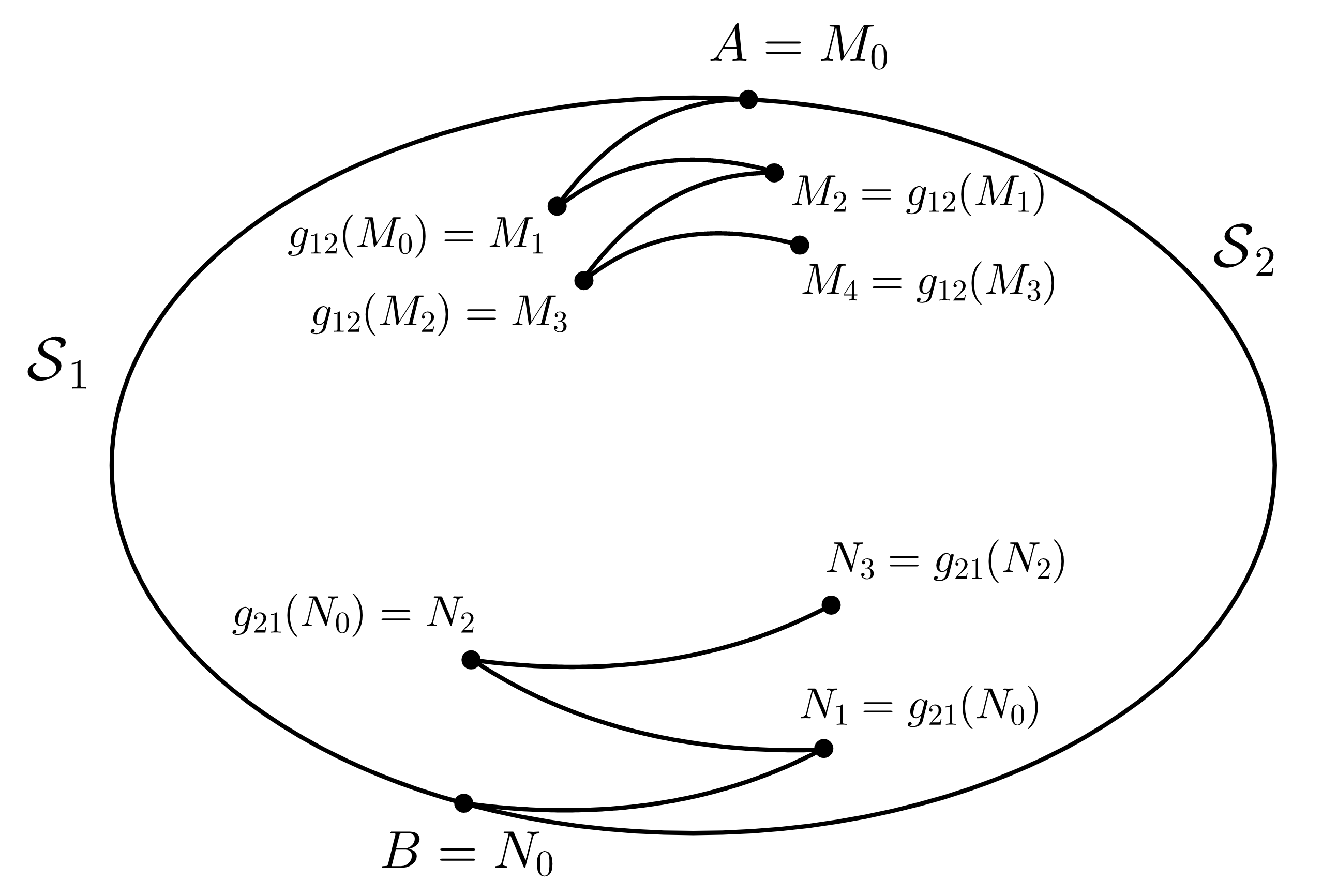}
\caption{\label{ZigzagTwoTiles} }
}
\end{figure}

Since $g_{21}(\mathcal S_2)$ is convex on $\partial \mathcal D_1$, and $g_{12}(\mathcal S_1)$ is convex on $\partial \mathcal D_2$, thus concave on $\partial \mathcal D_1$, their intersection is at most one point. Hence we have $\mathbf r^{-1}(g_{12}(A)) \leq \mathbf r^{-1} (g_{21}(B))$.
Since $M_1=g_{12}(A)\in \mathcal I\subset \partial\mathcal D_1$, we can consider its image under $g_{12}$. Because $M_1$ is non-differentiable, $M_2=g_{12}(M_1)=g_{12}^2(A)$ is non-differentiable, thus $M_2\in \mathcal I$, and we have $\mathbf r^{-1}(M_1)< \mathbf r^{-1}(M_2)$. Now, we can consider the image of $M_2$ under $g_{12}$, inducing a point $M_3=g_{12}(M_2)=g_{12}^3(A)$ that is also on $\mathcal I$. Continuing this process, let $M_i=g_{12}(M_{i-1})$, where for every index $i$ we have $M_i\in\mathcal I$ and $\mathbf r^{-1}(M_{i-1})< \mathbf r^{-1}(M_i)$
Similarly, we are able to obtain points $N_1=g_{21}(B)$ and $N_j=g_{21}(N_{j-1})$, see Figure~\ref{ZigzagTwoTiles}.

In the parametrisation $\mathbf r$ of $\mathcal I$, the end point of an arbitrary arc $\widehat{M_iM_{i+1}}$ is a positive cusp on $\partial\mathcal D_1$ if and only if $\widehat{M_iM_{i+1}}$ is convex on $\partial\mathcal D_1$.
Conversely, the end point of an arbitrary arc $\widehat{N_jN_{j-1}}$ is a positive cusp on $\partial\mathcal D_1$ if and only if $\widehat{N_jN_{j-1}}$ is concave on $\partial\mathcal D_1$. 
Thus there are no $i,j$ indices such that $M_i=N_j$. This yields that there are infinitely many $M_i$ and $N_j$ points, and hence infinitely many arcs on $\mathcal I$ congruent to $\widehat{AM_1}$ and $\widehat{BN_1}$, which is a contradiction by Lemma~\ref{finite}.

\noindent {\bf Case 2} $\;\mathcal S_1=g_{21}(\mathcal S_2)$

If $g_{21}(A)=A$, then $g_{21}(B)=B$, and $g_{21}$ is the reflection about the line $AB$. This yields that $K$ is axially symmetric.

If $g_{21}(A)=B$, then $g_{21}(B)=A$, and $g_{21}$ is the reflection about the midpoint $O$ of the segment $\overline{AB}$. This yields that $K$ is centrally symmetric about $O$.

\section {Proof of Theorem~\ref{three}.}

\begin{lemma}
Every non-empty $\mathcal S_i$ $(i=1,2,3)$ is either the union of at most two connected simple curves, or empty.
\end{lemma}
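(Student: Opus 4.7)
The plan is to compare two estimates for the number of connected components of $K \setminus \mathcal D_1$; I focus on $\mathcal D_1$ (the other tiles are symmetric) and may assume $\mathcal S_1 \neq \emptyset$, the empty case being explicitly allowed.

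First, I would argue a purely topological statement: $K \setminus \mathcal D_1$ has exactly $n_1$ connected components. Treating $\partial K \cup \partial \mathcal D_1$ as a planar graph, let $\mathcal S_1$ have $k$ arc components and $m$ isolated point components, so $n_1 = k + m$. The vertices are the $2k$ endpoints of the arc components together with the $m$ isolated points, giving $V = 2k + m$. The edges are the $2k+m$ arcs of $\partial K$ between consecutive vertices (of which $k$ coincide with the arc components of $\mathcal S_1$, shared with $\partial \mathcal D_1$), together with the $k+m$ interior arcs of $\partial \mathcal D_1$ lying in $\inter K$; thus $E = 3k + 2m$. Euler's formula gives $F = 2 + n_1$, and subtracting the outer face and $\inter \mathcal D_1$ leaves exactly $n_1$ inner faces, which are precisely the components of $K \setminus \mathcal D_1$. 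A double count of face incidences shows moreover that each such component has face-degree $2$ and is therefore a ``digon'' bounded by one arc of $\partial K \setminus \mathcal S_1$ paired with one interior arc of $\partial \mathcal D_1$.

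Second, I would use the normality hypothesis: $\mathcal D_1 \cap \mathcal D_j$ is connected for $j \in \{2, 3\}$. Since $\mathcal D_j$ is a topological disc and $\mathcal D_1 \cap \mathcal D_j$ is a connected closed subset of its boundary, the difference $\mathcal D_j \setminus \mathcal D_1$ is connected. Therefore
\[
K \setminus \mathcal D_1 \;=\; (\mathcal D_2 \setminus \mathcal D_1) \cup (\mathcal D_3 \setminus \mathcal D_1)
\]
has at most two connected components.

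Comparing the two counts forces $n_1 \leq 2$; relabeling tiles gives the same bound for $n_2$ and $n_3$. The main obstacle is the planar-topology count in the first step: arc and isolated point components of $\mathcal S_1$ must be handled uniformly (tangencies of $\partial \mathcal D_1$ with $\partial K$ produce point components), and an interior arc of $\partial \mathcal D_1$ that touches $\partial K$ at a single interior point contributes an additional point component to $\mathcal S_1$, keeping the Euler count coherent—setting up this CW decomposition cleanly and verifying the degree count that yields the bijection between faces and boundary arcs is where most of the bookkeeping lies.
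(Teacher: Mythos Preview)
Your argument is correct, and it takes a genuinely different route from the paper's. The paper argues directly by contradiction, without Euler's formula and, crucially, without using normality: assuming $\mathcal S_2$ has at least three components, one picks points $X,Y,Z$ from distinct components; a Jordan-curve argument (curves in $\inter\mathcal D_1$ and $\inter\mathcal D_2$ would have to cross) shows that any tile $\mathcal D_i$ with $i\neq 2$ can meet only one of the three boundary arcs $\widehat{XY},\widehat{YZ},\widehat{ZX}$ of $\partial K$. Covering $\partial K$ then needs at least three tiles besides $\mathcal D_2$, contradicting the total of three.

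The essential difference is your reliance on the normality hypothesis to get $\mathcal D_j\setminus\mathcal D_1$ connected. The paper's proof avoids this, and the extra generality is actually used: immediately after the lemma the paper sets up three topological cases, explicitly noting that case~III and certain degenerate instances of cases~I and~II are \emph{not} normal, and later remarks (Remark~\ref{NonNormalTopI}, Remark~\ref{NonNormalTopII}, and Section~\ref{NonNormal}) rely on this classification holding without normality. Your proof is valid for the lemma as it sits inside Theorem~\ref{three}, but it would not transfer to the non-normal discussion. On the upside, your Euler count yields the digon structure of the components of $K\setminus\mathcal D_1$ for free, which the paper's argument does not extract.
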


\begin{figure}[h]
{\centering
\includegraphics[width=0.6\textwidth]{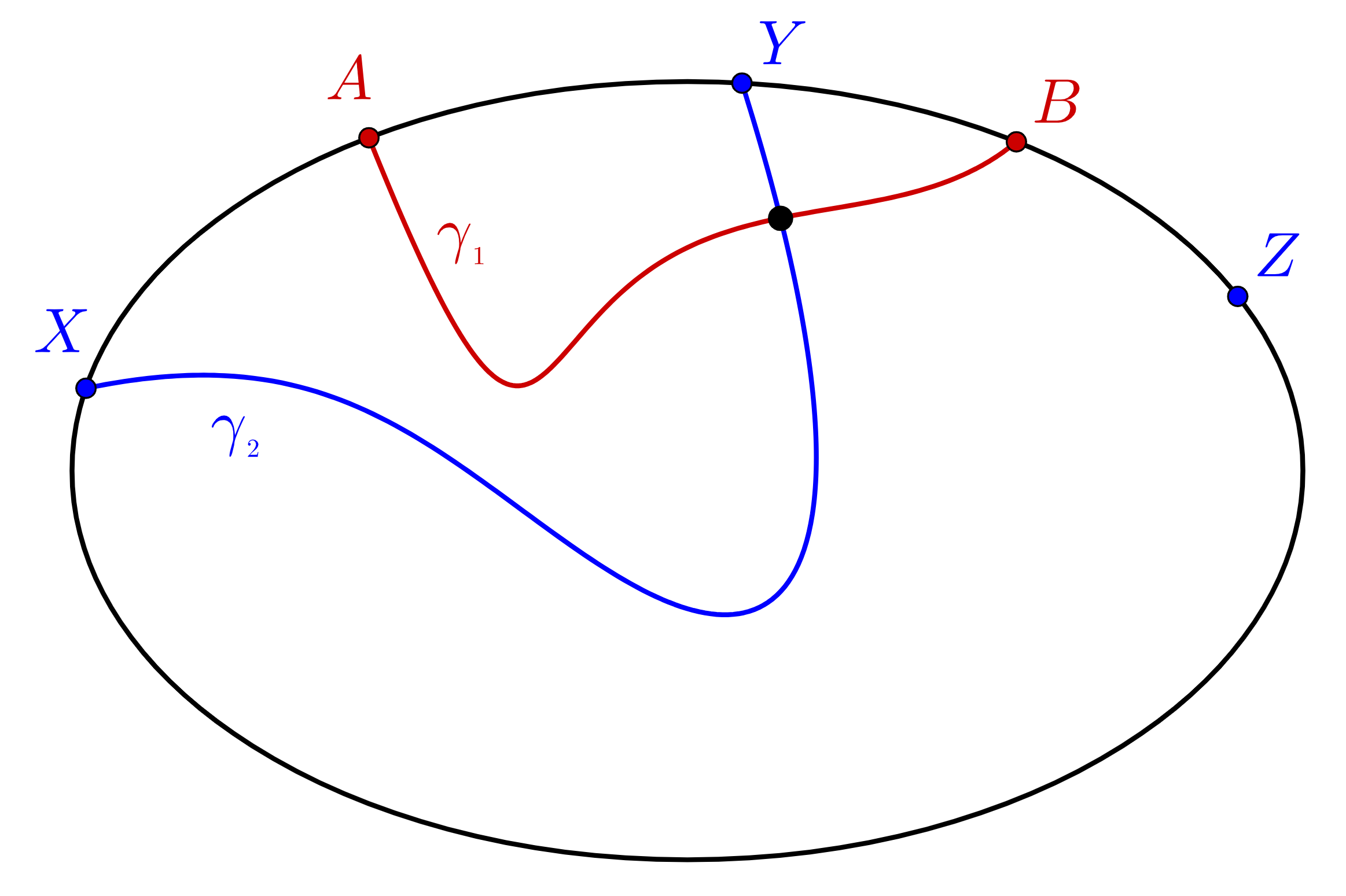}
\caption{\label{Multipiece} $\mathcal{S}_i$ consists of at most two components.}
}
\end{figure}

\begin{proof}
Suppose that an $\mathcal S_i$, say $\mathcal S_2$, consists of at least three components, and choose points $X,\, Y,\, Z\in\mathcal S_2$ from three different components.
Suppose $\mathcal S_1$ contains points $A, B$ such that $A$ is on the curve $\widehat{XY}\subseteq \partial K$ not containining $Z$, and $B$ is on $\widehat{YZ}\subseteq \partial K$ not containing $X$.
Let $\gamma_1\subset\inter \mathcal D_1$ be a curve joining $A$ and $B$, and $\gamma_2\subset \inter \mathcal D_2$ a curve joining $X$ and $Y$, see Figure~\ref{Multipiece}.
Given the continuity of the curves, $\gamma_1\cap\gamma_2\neq\emptyset$, hence $\inter \mathcal D_1 \cap \inter \mathcal D_2\neq \emptyset$, a contradiction.
Thus $\mathcal D_i$ $(i\neq 2)$ only contains points from one of the curves $\widehat{XY},\, \widehat{YZ},\, \widehat{ZX}$ hence we need at least three tiles apart from $\mathcal D_2$ to cover the boundary of $K$.
Since we have three tiles altogether, this yields a contradiction, and $\mathcal S_2$ consists of at most two connected curves.
\end{proof}

As a result of the lemma, we distinguish three topological cases, see Figure~\ref{TopCases}.
If an $\mathcal S_i$ consists of two components, the other two are necessarily non-empty and connected. We label this case I. 
If every $\mathcal S_i$ is connected, at most one of them is empty by Lemma~\ref{intersection}: in case II, neither is empty, and in case III, exactly one is empty. 
We note that the tilings of case III are not normal tilings, along with some extreme subcases of cases I and II. (We also note that technically there should be a fourth case when the complete boundary of $K$ is covered by a single tile, but it trivially cannot occur.)

\begin{figure}[ht]
\includegraphics[width=\textwidth]{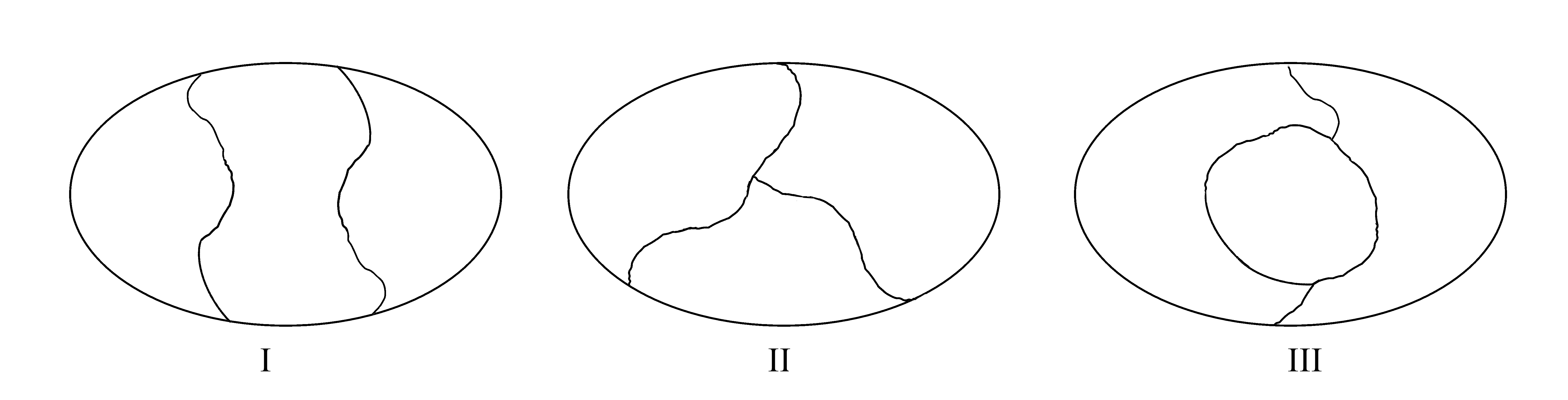}
\caption{\label{TopCases} The topological cases of monohedral tilings with three tiles.}
\end{figure}


\begin{lemma}\label{connected}
Every $\mathcal S_i$ $(i=1,2,3)$ is connected in every monohedral tiling with three tiles.
\end{lemma}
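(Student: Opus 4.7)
The plan is to argue by contradiction; after the previous lemma the only configuration left to exclude is Case~I, where some $\mathcal{S}_i$ has two components. Without loss of generality $\mathcal{S}_1=\mathcal{S}_1'\cup\mathcal{S}_1''$ has two components and $\mathcal{S}_2,\mathcal{S}_3$ are each a single nonempty connected curve. Reading $\partial K$ cyclically, the four arcs appear in the order $\mathcal{S}_1',\mathcal{S}_2,\mathcal{S}_1'',\mathcal{S}_3$, separated by four transition points $P_1,P_2,P_3,P_4$.

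I split into Subcase~a ($\mathcal{D}_2\cap\mathcal{D}_3=\emptyset$) and Subcase~b ($\mathcal{D}_2\cap\mathcal{D}_3\neq\emptyset$). In Subcase~a the curves $\mathcal{D}_1\cap\mathcal{D}_2$ and $\mathcal{D}_1\cap\mathcal{D}_3$ are single simple arcs joining consecutive $P_j$'s through the interior, so $\partial\mathcal{D}_1$ is a Jordan curve made of four arcs while $\partial\mathcal{D}_2$ and $\partial\mathcal{D}_3$ consist of only two each. Applying Lemma~\ref{equidecomp} to each pair of congruent boundaries with the common arc $\mathcal{D}_i\cap\mathcal{D}_j$ subtracted yields
\[
\mathcal{S}_1\cup(\mathcal{D}_1\cap\mathcal{D}_3)\sim\mathcal{S}_2
\qquad\text{and}\qquad
\mathcal{S}_1\cup(\mathcal{D}_1\cap\mathcal{D}_2)\sim\mathcal{S}_3,
\]
so in particular $\length{\mathcal{S}_2},\length{\mathcal{S}_3}>\length{\mathcal{S}_1}$.

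Next, $C^1$-smoothness of $\partial K$ forces the two interior angles at each $P_j$ to sum to $\pi$. Hence at every $P_j$ either one tile's boundary is smooth and the other has a cusp, or both have corners of complementary angles. The congruences $g_{ij}$ preserve cusps, corners and corner angles, while smoothness of $\partial K$ makes the sub-arcs of each $\mathcal{S}_i$ smooth. I combine these two observations with the equidecomposability above to constrain rigidly both the configuration at the $P_j$'s and any non-differentiable points on the interior curves. An iterative application of the $g_{ij}$'s in the spirit of Case~1 of the proof of Theorem~\ref{two}, seeded by a positive-length coincidence from Lemma~\ref{intersection} and its remark, then generates infinitely many mutually congruent disjoint subarcs of $\partial K$, contradicting Lemma~\ref{finite}.

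Subcase~b is handled analogously: $\mathcal{D}_2\cap\mathcal{D}_3$ is a simple arc in the interior joining two triple points on $\partial\mathcal{D}_1$, so the equidecomposability relations acquire an extra $\mathcal{D}_2\cap\mathcal{D}_3$ term, but the same combination of cusp/corner bookkeeping and iteration yields the contradiction. The main obstacle is controlling the interior curves $\mathcal{D}_i\cap\mathcal{D}_j$, whose a priori unconstrained cusps and corners might attempt to absorb the four-versus-two transition asymmetry between $\partial\mathcal{D}_1$ and the other two boundaries; the strict convexity of $K$ together with Lemma~\ref{intersection} should force any such balancing attempt to iterate into infinitely many congruent copies on $\partial K$, again contradicting Lemma~\ref{finite}.
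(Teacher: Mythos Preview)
Your proposal is not a proof but an outline whose decisive step is left as a hope. The preliminary observations are fine: the equidecomposability relations you write down are correct, and the remark that the two interior angles at each $P_j$ sum to $\pi$ is the right starting point. But the sentence that is supposed to finish Subcase~a --- ``an iterative application of the $g_{ij}$'s in the spirit of Case~1 of the proof of Theorem~\ref{two} \dots\ generates infinitely many mutually congruent disjoint subarcs of $\partial K$'' --- is not an argument. In Theorem~\ref{two}, Case~1, the iteration lives on the single interior curve $\mathcal I$, is driven by one isometry $g_{12}$, and produces congruent arcs on $\mathcal I$ (not on $\partial K$); the obstruction to termination came from a parity mismatch between the $M_i$-cusps and the $N_j$-cusps. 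None of that structure is set up here: you have two interior curves, three isometries, and no mechanism shown that forces iterates to stay on a single curve or to avoid cancellation. Saying the strict convexity ``should force'' infinite iteration is exactly the gap; you have not exhibited even one step of such an iteration in this topology, let alone shown it cannot terminate. The last paragraph of your proposal in fact names the obstacle (the interior curves might absorb the asymmetry) without resolving it.

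For comparison, the paper does \emph{not} use an iteration argument for this lemma at all. It first uses the equidecomposability $\mathcal S_1\sim \mathcal S_2\cup\mathcal I_2$ to conclude that the interior curves are piecewise differentiable, then lets $k$ be the number of non-differentiable boundary points of $\mathcal D_1$ (all of which lie on $\mathcal I_1$). Comparing how many of these propagate to $\relint\mathcal I_1$ and $\relint\mathcal I_2$, and hence to $\partial\mathcal D_2$, yields $2k-4\le k$, so $k\in\{2,3,4\}$. Each value of $k$ is then eliminated by a short bookkeeping of positive/negative cusps versus ordinary non-differentiable points on the three congruent boundaries. This is the ``cusp/corner bookkeeping'' you allude to, but it has to be carried out, and it finishes the lemma without any appeal to Lemma~\ref{intersection} or to an infinite iteration.

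A minor point: your Subcase~b cannot occur. Since $\partial\mathcal D_1$ is a Jordan curve meeting $\partial K$ in the two arcs $\mathcal S_1',\mathcal S_1''$, its two interior subarcs must join $P_2$ to $P_3$ and $P_4$ to $P_1$ (the ``crossed'' pairing $P_2\!-\!P_4$, $P_3\!-\!P_1$ would force the two interior subarcs to intersect). Hence $K\setminus\mathcal D_1$ has exactly two components, which are $\mathcal D_2$ and $\mathcal D_3$, and $\mathcal D_2\cap\mathcal D_3=\emptyset$ automatically.
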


\begin{proof}
Suppose for contradiction that an $\mathcal S_i$, say $\mathcal S_2$, is not connected. Hence by Lemma~\ref{Multipiece}, $\mathcal S_2$ consists of exactly two components, denoted by $\mathcal S_2'$ and $\mathcal S_2''$. We use the notation $\mathcal I_1=\mathcal D_1\cap \mathcal D_2$ and $\mathcal I_2=\mathcal D_2\cap \mathcal D_3$, see Figure~\ref{NotConnected}.
As $\partial\mathcal D_1=\mathcal S_1\cup\mathcal I_1$ and $\partial \mathcal D_2=\mathcal I_1\cup\mathcal S_2\cup\mathcal I_2$ are congruent, we obtain by Lemma~\ref{equidecomp} that $\mathcal S_1$ and $\mathcal S_2\cup\mathcal I_2$ are equidecomposable. This yields that $\mathcal I_2$ is differentiable except for a finite number of connecting points, which is similarly true for $\mathcal I_1$.

\begin{figure}[ht]
{\centering
\includegraphics[width=0.55\textwidth]{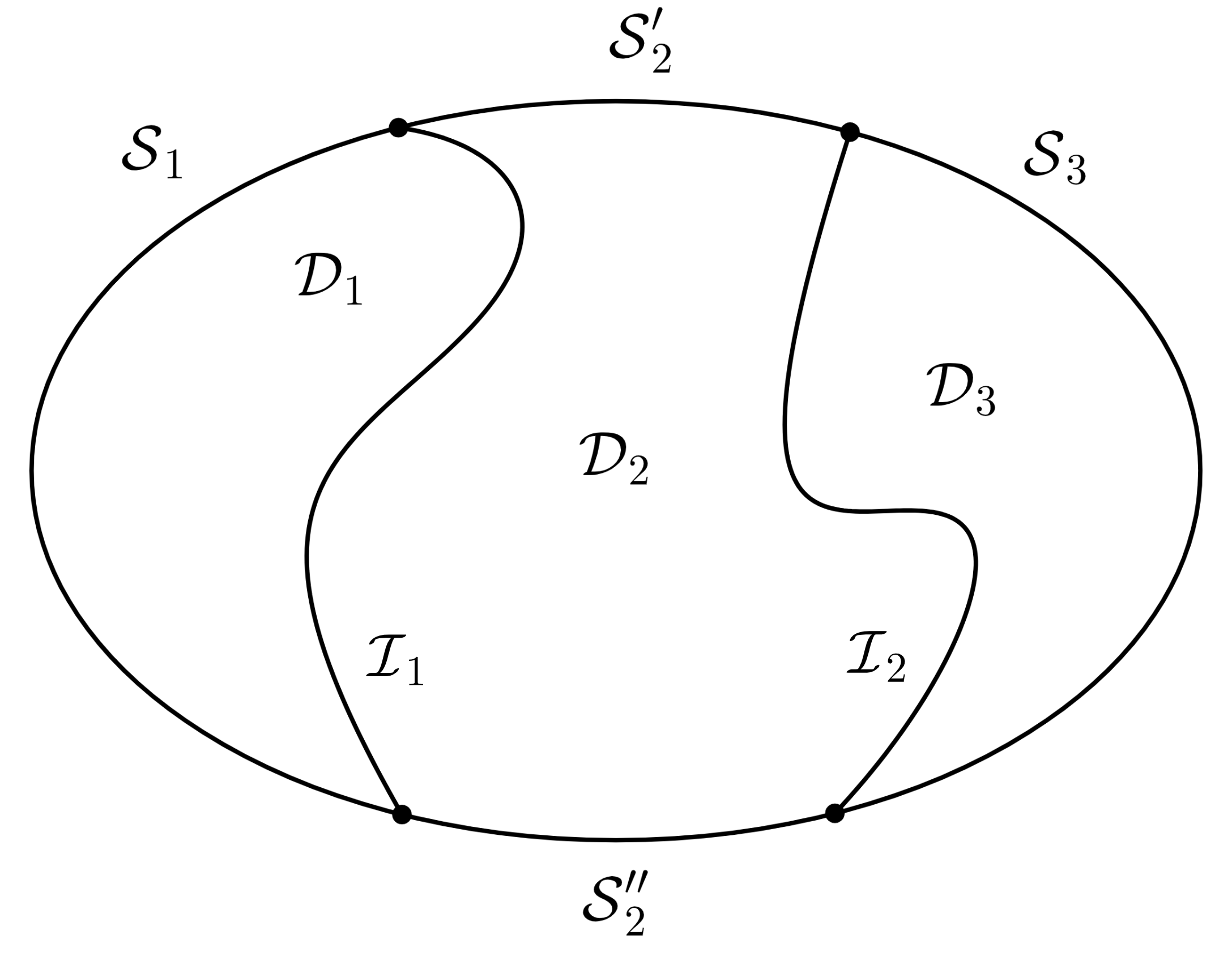}
\caption{\label{NotConnected} The case where an $\mathcal S_i$ is not connected does not occur.}
}
\end{figure}

As $\mathcal S_1$ is differentiable apart from possibly its endpoints, we can say that every non-differentiable boundary point of $\mathcal D_1$ is on $\mathcal I_1$, say their number is $k$. Out of these $k$ points, at least $k-2$ are on $\relint \mathcal I_1$. Similarly, $\mathcal I_2$ also contains $k$ non-differentiable points on $\partial\mathcal D_3$, of which at least $k-2$ are on $\relint \mathcal I_2$. 
The non-differentiable points on $\relint \mathcal I_1 \subseteq\partial\mathcal D_1$ and $\relint \mathcal I_2\subseteq \partial\mathcal D_2$ are also non-differentiable on the boundary of $\mathcal D_3$. This yields that $2k-4\leq k \leq 2k$, and we have $0\leq k \leq 4$. As each endpoint of $\mathcal I_1$ and $\mathcal I_2$ is non-differentiable on the boundary of at least one tile it's contained in, we have $k\geq 2$.

In the case where $k=2$, the calculation yields that both non-differentiable points of $\partial\mathcal D_1$ and $\partial\mathcal D_3$ are the endpoints of $\mathcal I_1$ and $\mathcal I_2$, respectively. Hence, the non-differentiable points of $\partial\mathcal D_2$ are also on $\partial K$.
As these points are also non-differentiable on the boundaries of the other tiles they're contained in, they are necessarily ordinary non-differentiable points on the boundary of $\mathcal D_2$. 
If at least one of the curves $\mathcal S_2'$ and $\mathcal S_2''$ is non-degenerate, the endpoints of $\mathcal I_1$ and $\mathcal I_2$ determine three or four points, at least one of which is differentiable on $\partial\mathcal D_2$. This induces a cusp on the boundary of either $\mathcal D_1$ or $\mathcal D_3$, which is a contradiction, as $\mathcal D_2$ only contains ordinary non-differentiable boundary points.

If $k=3$, $\relint \mathcal I_1$ and $\relint \mathcal I_2$ each contain exactly one non-differentiable point. The endpoints of $\mathcal I_1$ and $\mathcal I_2$ are non-differentiable on $\partial\mathcal D_1$ and $\partial \mathcal D_3$, and exactly one of them is non-differentiable on $\partial \mathcal D_2$. Assume this is an endpoint of $\mathcal I_2$.
Then this point is a ordinary non-differential point on both $\partial \mathcal D_2$ and $\partial \mathcal D_3$, so $\partial\mathcal D_1$ must contain one as well, which must be the one on $\relint \mathcal I_1$. This yields that $\partial\mathcal D_2$ only contains at most one cusp, a contradiction, since $\partial \mathcal D_1$ contains two.

If $k=4$, all endpoints of $\mathcal I_1$ and $\mathcal I_2$ are positive cusps on the boundary of $\mathcal D_1$ and $\mathcal D_3$, and differentiable on $\partial \mathcal D_2$. Apart from their endpoints, they both contain two non-differentiable points. 
If $\partial\mathcal D_1$ and $\partial\mathcal D_3$ each contain one or two ordinary non-differentiable points, $\partial \mathcal D_2$ contains, respectively, two or four, a contradiction. Hence every non-differentiable point is a cusp. 
It is easy to count that the number of positive and negative cusps can't be equal on the boundary of every tile. 
\end{proof}

\begin{remark}\label{NonNormalTopI}
The case where both $\mathcal S_2'$ and $\mathcal S_2''$ consist of a single point is a non-normal tiling. It is easy to see that the constraints for $k$ are the same, and $k=3$ and $k=4$ yield a contradiction in the same way as in the main part of the proof. In the case $k=2$, the endpoints of $\mathcal I_1$ coincide with the endpoints of $\mathcal I_2$, say their endpoints are $A$ and $B$, and they are both ordinary non-differentiable points on the boundary of all three tiles. 
As $A$ and $B$ are the only two non-differentiable points on the boundary of every tile, we have for every isometry $g_{ij}$ that either both $A$ and $B$ are fixed points, or $g_{ij}(A)=B$ and $g_{ij}(B)=A$. There are four isometries satisfying one of these requirements: the identity, the reflection about $AB$, the reflection about the perpendicular bisector of the segment $\overline{AB}$, and the point reflection about the midpoint of $\overline{AB}$. Every option leads to a contradiction. We leave the details to the reader.
\end{remark}

\begin{remark*}
The differentiability of the boundary is an important condition. It is easy to find a counterexample in the case of a non-differentiable boundary, see the parallelogram in Figure~\ref{Nondiff}. We also emphasise that due to the differentiability of $\partial K$, the boundaries of the tiles are automatically piecewise differentiable in topological case I, so it is not necessary to make that assumption in Lemma~\ref{connected}.
\end{remark*}


Now, we begin the proof of Theorem~\ref{three}. By Lemma~\ref{connected}, topological case I does not occur, and as the tilings of case III are not normal, we only have to consider case II. Hence we have that every $\mathcal S_i$ is a non-empty connected curve.

By Lemma~\ref{intersection}, there exist indices $i\neq j$ such that the intersection of $\mathcal S_j$ and $g_{ij}(\mathcal S_i)$ is a non-degenerate curve.
Assume without loss of generality that $\mathcal S_1\cap g_{21}(\mathcal S_2)$ is a non-degenerate curve, which also yields that $\length{\mathcal S_1},\length{\mathcal S_2}>0$.

In the case of a normal tiling, $\length{\mathcal S_3}>0$. 
Then by Lemma~\ref{threebasis}, the intersection of the three tiles is a single point, say $M=\mathcal D_1\cap\mathcal D_2\cap\mathcal D_3$, in $\inter K$, and the intersection of any two tiles is a simple surve connecting $M$ and a point on $\partial K$. We denote the non-degenerate curve $\mathcal D_i\cap\mathcal D_j$ by $\mathcal I_{ij}$, and its endpoint on $\partial K$ by $A_{ij}$ $(1\leq i<j\leq 3)$.

\noindent {\bf Case 1} $g_{21}$ is orientation-preserving.

We present the proof in the case $M\notin g_{21}(\mathcal S_2)$. The case $M\in g_{21}(\mathcal S_2)$ can be proved similarly. The details are left to the reader.

Let us consider the convex differentiable curve $\mathcal S_1\cup g_{21}(\mathcal S_2)\subseteq \partial\mathcal D_1$. As $M$ is not on the curve, its two endpoints are on $\mathcal I_{13}$ and $\mathcal I_{12}$. We denote its endpoint on $\mathcal I_{13}$ by $X_1$. Let $X_2=g_{12}(X_1)$ and $X_3=g_{12}(X_2)$, where we have $X_2\in\mathcal I_{12}$ and $X_3\in\mathcal I_{23}$. 

\begin{figure}[ht]
\begin{center}
\includegraphics[width=0.65\textwidth]{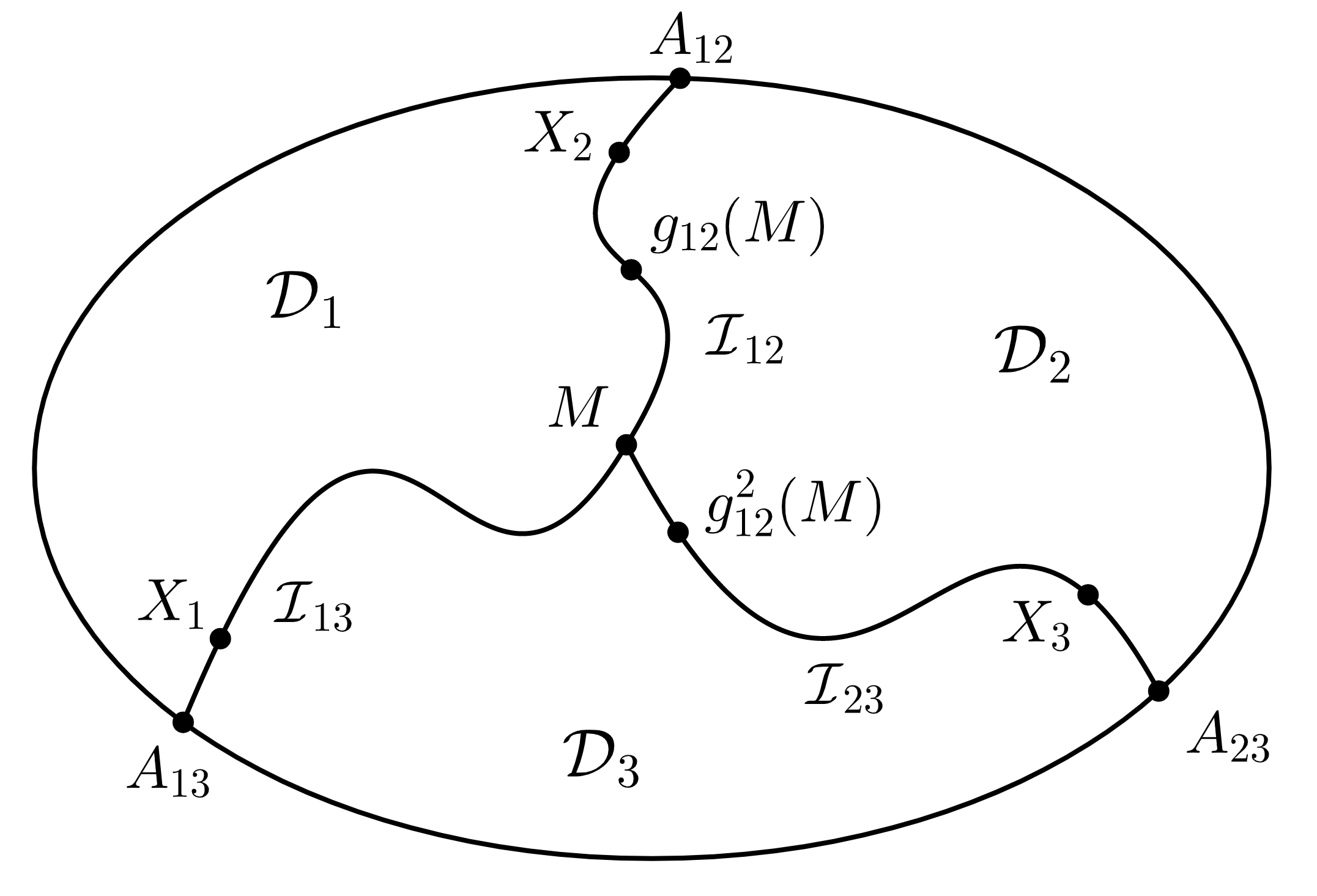}
\caption{\label{preservingMwrong}}
\end{center}
\end{figure}

First consider the case $g_{12}(M)\in\mathcal I_{12}\setminus\{M\}$. $g_{12}(M)$ is the unique point on $\widehat{MX_2}$ which is of distance $\length{\widehat{Mg_{12}(M)}}$ from $M$ on $\partial \mathcal D_1$, hence $g_{12}^2(M)$ is the unique point on $\widehat{g_{12}(M)X_3}$ which is of distance $\length{\widehat{Mg_{12}(M)}}$ from $g_{12}(M)$ on $\partial \mathcal D_2$. This point is exactly $M$, so $g_{12}^2(M)=M$.  
This yields that $g_{12}$ is either a rotation around $M$, or a reflection about the midpoint of $\overline{Mg_{12}(M)}$. The former doesn't fit with our assumption that $g_{12}(M)\neq M$, and the latter yields $X_3=g_{12}^2(X_1)=X_1$, a contradiction.

From now on, we consider the case $g_{12}(M)\in \mathcal I_{23}$. 
Let $M_1=g_{21}(M)$ and $M_2=g_{12}(M)$, where we have $M_1\in\mathcal I_{13}$ and $M_2\in\mathcal I_{23}$.
Let $\mathcal C_i=\widehat{MM_i}\subseteq \mathcal I_{i3}$, and note that $\mathcal C_2=g_{12}(\mathcal C_1)$.
We define the curves $\mathcal J_1=\widehat{X_1M_1}\subseteq\mathcal I_{13}$, $\mathcal J_2=\widehat{X_2M}\subseteq\mathcal I_{12}$, $\mathcal I_3=\widehat{X_3M_2}\subseteq\mathcal I_{23}$, and note that $\mathcal J_2=g_{12}(\mathcal J_1)$ and $\mathcal J_3=g_{12}(\mathcal J_2)$.

For the case $g_{12}(M)\in\relint \mathcal I_{23}$, we consider five subcases, based on the position of $g_{12}(\mathcal S_1)$ on $\partial\mathcal D_2$. 

\begin{figure}[ht]
\includegraphics[width=\textwidth]{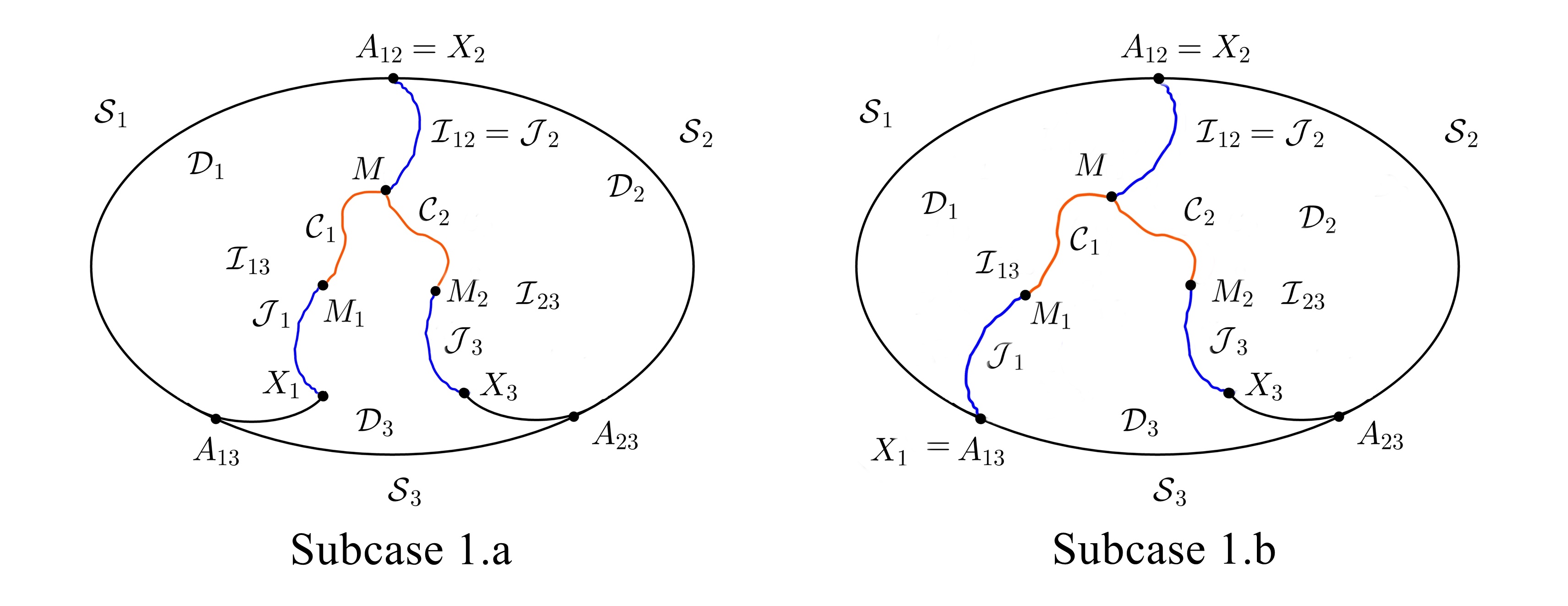}
\caption{\label{preservingCasesab}}
\end{figure}

\noindent{\bf Subcase 1.a} $\; g_{12}(A_{12})\in\relint (\mathcal I_{23})$ and $g_{12}(A_{13})\in\relint(\mathcal S_2)$. (See Figure~\ref{preservingCasesab}.)

We consider the non-differentiable points of the boundaries of the tiles.
$g_{12}(A_{13})\in\relint \mathcal S_2$ yields $g_{21}(A_{12})\in\relint \mathcal I_{13}$, thus $A_{13}$ and $A_{23}$ are differentiable on $\partial\mathcal D_1$ and $\partial\mathcal D_2$, respectively, hence they are cusps on $\partial\mathcal D_3$.

If $\relint \mathcal J_1$ contains, say, $m$ non-differentiable points on the boundary of $\mathcal D_3$, it contains exactly the same $m$ on $\mathcal D_1$. Moreover, as $\mathcal J_2$ and $\mathcal J_3$ are congruent to $\mathcal J_1$, the number of non-differentiable points on $\relint \mathcal J_2$ and $\relint \mathcal J_3$ is $m$ as well. This yields that every tile obtains exactly $2m$ non-differentiable points from the interior points of $\mathcal J_i$. 
Additionally, if an arbitrary point $A\in\relint \mathcal J_1$ is a positive non-differentiable point on $\partial\mathcal D_3$, it's negative on $\partial \mathcal D_1$, $g_{12}(A)$ is positive on $\partial\mathcal D_1$, and $g_{12}^2(A)$ is negative on $\partial\mathcal D_3$. Hence the number of positive and negative non-differentiable points on $\relint \mathcal J_i$ coincide on the boundary of every tile. 
Based on these assertions, we may disregard the non-differentiable points of $\relint \mathcal J_i$ from now on.

Let $\mathcal C_1$ and $\mathcal C_2$ contain $k$ non-differentiable points apart from their endpoints. Then $\partial \mathcal D_1$ and $\partial \mathcal D_3$ contain $k$ and $2k+2$ non-differentiable points, respectively, apart from the points $M,\, M_i, \, X_i$.
For $k$ to have a non-negative value, the properties of thed $M_i$-s and $X_i$-s must be so that the number of non-differentiable points on $\partial\mathcal D_1$ increases by at least two compared to their number on $\partial\mathcal D_3$.

As the image of $X_2=A_{12}$ under $g_{12}$ and $g_{21}$ is on $\mathcal I_{23}$ and $\mathcal I_{13}$, if $X_2$ is not differentiable on the boundary of one or two tiles it's contained in, every tile obtains exactly one or two non-differentiable points from the points $X_i$.
If $M$ is non-differentiable on $\partial\mathcal D_1$, then $M_2$ is non-differentiable on $\partial\mathcal D_3$.

\noindent {\bf Subcase 1.b} $\; g_{12}(A_{12})\in\relint\mathcal I_{23}$ and $g_{12}(A_{13})=A_{12}$.

Using the notation in Subcase 1.a, $\partial\mathcal D_1$ and $\partial \mathcal D_3$ contain, respectively, $k$ and $2k+1$ non-differentiable points apart from $M,\,M_i,\, X_i$, thus we have to increase the number of non-differentiable points on the boundary of $\mathcal D_1$ compared to $\mathcal D_3$.

Similarly to the previous case, $\partial\mathcal D_1$ can't obtain more non-differentiable points from $M$ and its images than $\partial\mathcal D_3$ does, and the same is true if $A_{13}$ is non-differentiable on $\partial\mathcal D_3$.
Thus $A_{13}$ must be differentiable on $\partial \mathcal D_3$, which induces a positive cusp on $\partial\mathcal D_1$. 
This yields $k=0$, and as we can't increase the number of non-differentiable points of $\partial\mathcal D_1$ further, $M$ must be differentiable on the boundary of $\mathcal D_3$.
Now, we have one positive cusp on the boundary of each tile, thus $M$ must have properties such that every tile obtains the same number of positive, and the same number of negative non-differentiable points from $M,\, M_1, \, M_2$. 
As $M$ is differentiable on $\partial \mathcal D_3$, if it's not differentiable on the boundary of $\mathcal D_1$ $(\mathcal D_2)$, then it's a positive non-differentiable point. Then $M_2$ $(M_1)$ is positive on $\partial\mathcal D_2$ $(\partial\mathcal D_1)$, and negative on $\partial\mathcal D_3$. This is a contradiction, because neither $M$, nor $M_1$ can be negative on $\partial\mathcal D_1$. 

\noindent {\bf Subcase 1.c} $\; g_{12}(A_{12})=A_{23}$ and $g_{12}(A_{13})\in\relint(\mathcal I_{12})$. (See Figure~\ref{preservingCasescde}.)

In this case, we consider not only the number of non-differentiable points of the boundaries, but also whether they are positive or negative. Say $\relint(\mathcal C_1)$ contains $m$ positive and $l$ negative cusps on $\partial\mathcal D_1$. 
As the differentiability of the points $X_i$ is independent from the differentiability of $M_i$-s, we consider eight different cases. The number of positive/negative cusps on $\partial \mathcal D_1$ is equal to the number of positive/negative cusps on $\partial\mathcal D_3$, this yields two equations in $m$ and $l$.
The two acquired equations are independent in all cases, but the values of $m$ and $l$ are either negative or not integers, a contradiction. The detailed computations can be found in Table~\ref{1cdetails}.

\begin{table}[h]
{\centering
\begin{tabular}{||c|cc|cc||}
\hline\hline
&$\partial\mathcal D_1$& &$\partial\mathcal D_3$&\\
& positive & negative & positive & negative\\
\hline
Aside from points $X_i,M_i$& $m+1$ & $l$ & $2l$ & $2m$\\
\hline
$X_1$ cusp on $\partial\mathcal D_1$ & +1 & +1 & +1 & 0\\
$X_1$ cusp on $\partial\mathcal D_3$ & 0 & 0 & +1 & 0\\
\hline
$M$ is differentiable on $\partial\mathcal D_1,\partial\mathcal D_2$ & 0 & 0 & +1 & 0\\
$M$ is differentiable on $\partial\mathcal D_1$, cusp on $\partial\mathcal D_2$ & +1 & 0 & 0 & +1 \\
$M$ positive cusp on $\partial\mathcal D_1,\partial\mathcal D_2$ & +2 & 0 & 0 & +3 \\
$M$ positive cusp on $\partial\mathcal D_1$, negative cusp on $\partial\mathcal D_2$ & +1 & +1 & +2 & +1\\
\hline\hline
\end{tabular}
\caption{\label{1cdetails} Further subcases of Subcase 1.c.}
}
\end{table}

\begin{figure}[ht]
\includegraphics[width=\textwidth]{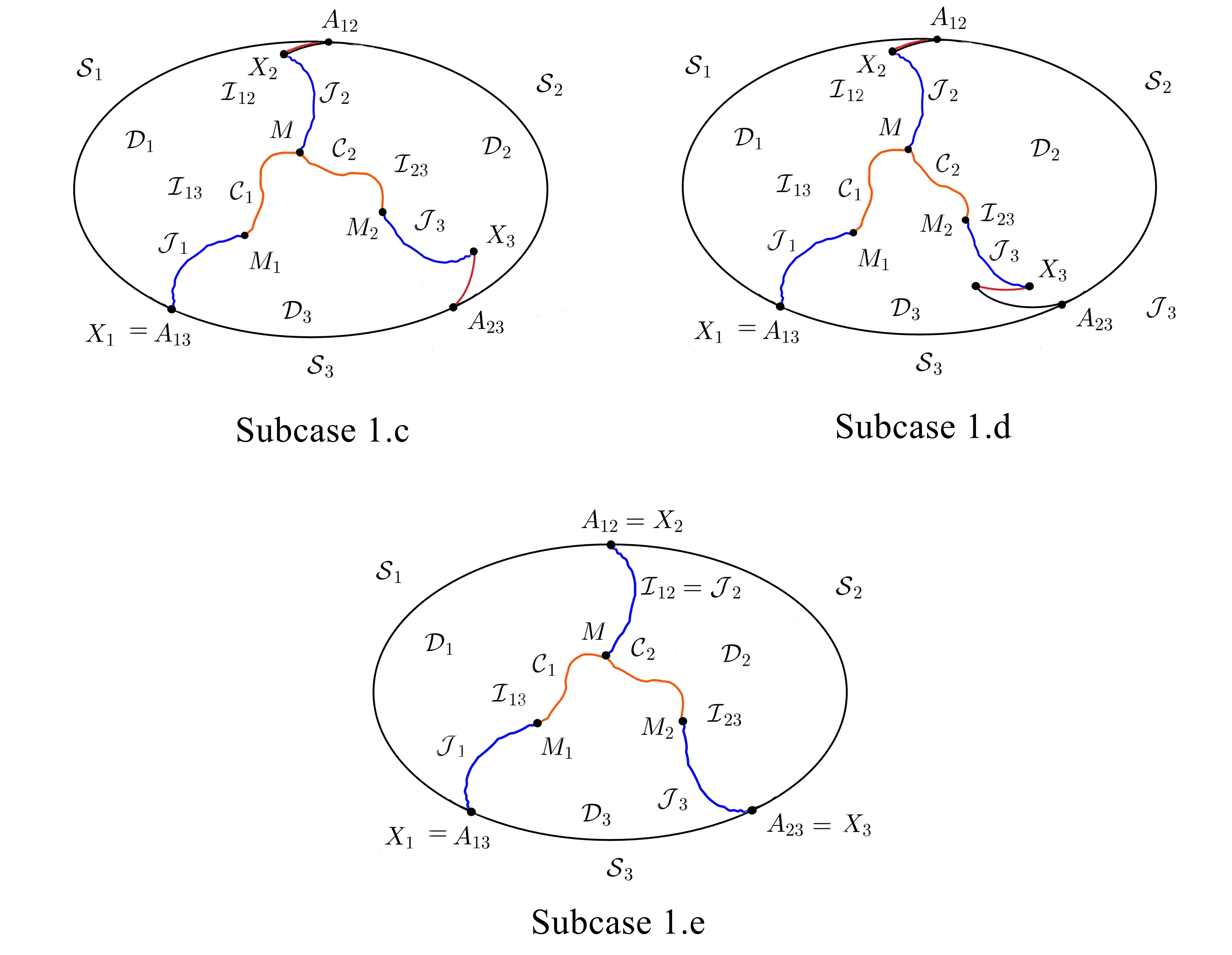}
\caption{\label{preservingCasescde}}
\end{figure}

\noindent {\bf Subcase 1.d} $\; g_{12}(A_{12})\in\mathcal I_{23}$ and $g_{12}(A_{13})\in\mathcal I_{12}$

In this case, we have that $A_{12}$ is a positive cusp on $\partial\mathcal D_1$ and $A_{23}$ is a positive cusp on $\partial\mathcal D_3$. Furthermore, $g_{12}(A_{12})\in\mathcal I_{23}$ is a positive cusp on $\partial\mathcal D_2$, thus a negative cusp on $\partial\mathcal D_3$. This yields that $\partial\mathcal D_1$ and $\partial\mathcal D_3$ contain, respectively, $k+1$ and $2k+2$ non-differentiable points apart from the points $X_i$ and $M_i$. Once again, we must choose the points $X_i$ and $M_i$ in a way that increases the number of non-differentiable points on $\partial \mathcal D_1$ compared to $\partial\mathcal D_3$.

If $X_1$ is a cusp on $\partial\mathcal D_3$, $X_1$ and $X_2$ are differentiable on $\partial\mathcal D_1$, so it does not obtain non-differentiable points from $X_i$-s.
If $X_1$ is an ordinary non-differentiable point on the boundary of both tiles, every tile gains exactly two non-differentiable points, hence their number doesn't change relative to each other. 
The only remaining case is when $X_1$ is a cusp on $\partial\mathcal D_1$, hence its number of non-differentiable points increases by exactly one more than on $\partial\mathcal D_3$. Thus $k=0$, $M$ is differentiable on $\partial\mathcal D_3$, and all three tiles contain two positive and one negative cusp aside from $M$ and $M_i$. Hence we must choose the differentiability of $M$ in a way that every tile gets the same number of positive, and the same number of negative non-differentiable points. 
As we've seen in the previous cases, this leads to a contradiction. 

\noindent {\bf Subcase 1.e} $\; g_{12}(A_{12})=A_{23}$ and $g_{12}(A_{13})=A_{12}$

$\partial \mathcal D_1$ and $\partial\mathcal D_3$ contain $k$ and $2k$ non-differentiable points apart from $M,\, M_i, \, X_i$.
The number and type of non-differential points obtained from points $X_i$ is the same on every tile, which yields that it has to be the same on $M,\, M_i$ as well. 
This leads to a contradiction.

This concludes the proof in the case $M_2 \in\relint \mathcal I_{23}$. 
Now, we consider the case $M=M_1=M_2$. We clearly have that $g_{12}$ is a rotation around $M$.

The position of $g_{12}(\mathcal S_1)$ on $\partial\mathcal D_2$ can be the same five types as previously. Using a similar argument as before, it is easy to see that $M$ must be an ordinary non-differential point on the boundary of each tile. More specifically, the angle of $M$ is the same on each tile, thus $120^\circ$, which yields that the curves $\mathcal J_{ij}$ $(1\leq i<j \leq 3)$ can be obtained from each other through a $120^\circ$ rotation around $M$.  
Now, consider every ordinary non-differentiable point of the boundaries. Assume $\mathcal I_{13}$ contains $n$ such points, where $n\geq 1$ due to the non-differentiability of $M$. Label these points $P_1, \ldots, P_n$, where $P_n=M$. $\mathcal I_{12}$ and $\mathcal I_{23}$ contain exactly $n$ ordinary non-differentiable points as well: points $g_{12}(P_i)$ and $g_{12}^2(P_i)$. 
Hence the boundary of every tile is the union of $2n-1$ curves that are pairwise disjoint apart from their endpoints, and where every curve contains exactly two ordinary non-differentiable points, their endpoints.

As $\widehat{P_iP_{i+1}}\cong\widehat{g_{12}(P_i)g_{12}(P_{i+1})}\cong \widehat {g_{12}^2(P_i)g_{12}^2(P_{i+1})}$, every tile contains two curves congruent to $\widehat{P_iP_{i+1}}$. This yields that the components on $\partial\mathcal D_1$ and $\partial\mathcal D_3$ containing $\mathcal S_1$ and $\mathcal S_3$, are congruent as well. 
As $\widehat{X_1P_1}\cong \widehat{X_2g_{12}(P_1)} \cong \widehat{X_3g_{12}^2(P_1)}$, we have that $\widehat{X_1X_2}\subseteq \partial \mathcal D_1$ is congruent to the curve $\widehat{X_1X_3}\subseteq \partial \mathcal D_3$. It is easy to see that this yields a contradiction if $g_{12}(\mathcal S_1)\neq \mathcal S_2$ (previously Subcases a-d), as the two curves don't contain the same number of non-differentiable points.
In the case where $g_{12}(\mathcal S_1)=\mathcal S_2$ (Subcase e), we obtain that $\mathcal S_1 \cong \mathcal S_3$. We also have that $\mathcal I_{ij}=\mathcal J_{ij}$, which yields by our previous remarks that the curves $\mathcal I_{ij}$ $(1\leq i<j \leq 3)$ can be obtained from each other through a $120^\circ$ rotation around $M$.

\noindent {\bf Case 2} $\; g_{12}$ is orientation-reversing.

\noindent {\bf Subcase 2.a} $\; g_{12}$ is a glide reflection.

As $g_{12}$ is a glide reflection, it has no fixed points. Thus we either have $g_{12}(A_{12})\in\relint{\mathcal S_2}$ or $g_{12}(A_{12})\in \partial\mathcal D_2\setminus \mathcal S_2$. The latter yields $g_{21}(A_{12})\in\relint{\mathcal S_1}$, which is a case fundamentally similar to the first one. Thus we may assume without loss of generality that $g_{12}(A_{12})\notin\mathcal S_2$.

Let $M_0=A_{12}$, $M_1=g_{12}(M_0)$ and $M_{i+1}=g_{12}(M_i)$ if $M_i\in\mathcal I_{12}$. By Lemma~\ref{finite}, $\mathcal I_{12}$ contains a finite number of arcs congruent to $\widehat{M_0M_1}$, thus there exists an index $k$ for which $M_k\in\mathcal I_{12}$ and $M_{k+1} \notin \mathcal I_{12}$, hence $M$ is between $M_k$ and $M_{k+1}$ on $\partial\mathcal D_2$. It is possible that $M_k=M$.
This yields that $\mathcal I_{12}$ consists of $k$ differentiable arcs congruent to $\widehat{M_0M_1}$ joined by cusps, and the differentiable arc between $M_k$ and $M$. 


Exactly one of the isometries $g_{13}$ and $g_{23}$ is orientation-reversing, and it's straightforward to see that it must be a glide reflection: if $g_{ij}$ were a reflection, $\mathcal I_{ij}$ would coincide with the line segment $\overline{A_{ij}M}$, and the length of line segments on the boundary of each tile couldn't be the same.


\begin{proposition}\label{ReversingDisjoint}
$\mathcal S_3$ has an intersection of positive arc length with at least one of the curves $g_{13}(\mathcal S_1)$ and $g_{23}(\mathcal S_2)$.
\end{proposition}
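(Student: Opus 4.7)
The strategy is to argue by contradiction: assume that both $\length{\mathcal S_3 \cap g_{13}(\mathcal S_1)}$ and $\length{\mathcal S_3 \cap g_{23}(\mathcal S_2)}$ equal zero. The main idea is to apply the perimeter argument used in the proof of Lemma~\ref{intersection}, this time to the convex curve $\partial \conv \mathcal D_3$, which carries all three of the arcs $\mathcal S_3$, $g_{13}(\mathcal S_1)$, $g_{23}(\mathcal S_2)$ (since each $\mathcal S_i$ lies on $\partial K \cap \partial \conv \mathcal D_i$, and thus its image under $g_{i3}$ lies on $\partial \conv \mathcal D_3$).

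By Lemma~\ref{connected} every $\mathcal S_i$ is a single connected arc, so the three curves above are connected, strictly convex subarcs of the Jordan curve $\partial \conv \mathcal D_3$. The remark after Lemma~\ref{intersection} turns the Case~2 starting hypothesis $\length{\mathcal S_1 \cap g_{21}(\mathcal S_2)} > 0$ into $\length{g_{13}(\mathcal S_1) \cap g_{23}(\mathcal S_2)} > 0$. Under the contradiction hypothesis, $\mathcal S_3$ meets each of $g_{13}(\mathcal S_1)$, $g_{23}(\mathcal S_2)$ in at most finitely many points, so both of the latter connected arcs sit inside the complementary sub-arc $\mathcal L = \partial \conv \mathcal D_3 \setminus \relint \mathcal S_3$. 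Summing arc lengths and using $\length{\mathcal S_1} + \length{\mathcal S_2} + \length{\mathcal S_3} = \length{\partial K}$ together with the strict inequality $\length{\partial \conv \mathcal D_3} < \length{\partial K}$ (a consequence of $\conv \mathcal D_3 \subsetneq K$), one extracts the quantitative lower bound
\[ \length{g_{13}(\mathcal S_1) \cap g_{23}(\mathcal S_2)} \;\geq\; \length{\partial K} - \length{\partial \conv \mathcal D_3} \;>\; 0. \]

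The hardest step, and the main obstacle, is to upgrade this positive lower bound to a genuine contradiction; the inequality by itself is merely consistent. I expect this is where the specific structure of Subcase~2.a becomes decisive: the glide reflection $g_{12}$ has no fixed points, so it produces the zigzag $M_0, M_1, \ldots, M_k$ on $\mathcal I_{12}$ with all arcs $\widehat{M_i M_{i+1}}$ mutually congruent and joined by cusps, and moreover exactly one of $g_{13}, g_{23}$ is itself a glide reflection. Running the same perimeter argument on $\partial \conv \mathcal D_1$ and $\partial \conv \mathcal D_2$ (which are congruent to $\partial \conv \mathcal D_3$) yields parallel estimates for $\length{\mathcal S_1 \cap g_{21}(\mathcal S_2)}$, and forcing $g_{13}(\mathcal S_1)$ to lie on the internal arcs $\mathcal I_{13} \cup \mathcal I_{23}$ while $\mathcal S_1$ is smooth on $\partial K$ should conflict, in the spirit of Subcases~1.a--1.e, with the cusp and non-differentiability data at the endpoints $A_{12}, A_{13}, A_{23}$, yielding the desired contradiction.
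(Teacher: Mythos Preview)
Your perimeter argument is valid but yields only the inequality
\[
\length{g_{13}(\mathcal S_1) \cap g_{23}(\mathcal S_2)} \;\geq\; \length{\partial K} - \length{\partial \conv \mathcal D_3},
\]
which, as you say, is not a contradiction: it merely rederives what the remark after Lemma~\ref{intersection} already gives. The remainder of your sketch (``running the same perimeter argument on $\partial\conv\mathcal D_1$, $\partial\conv\mathcal D_2$'' and appealing vaguely to cusp data ``in the spirit of Subcases~1.a--1.e'') does not close the gap. Parallel perimeter estimates on the other tiles give the same kind of overlap inequality, not a new constraint, and nothing in your outline explains how cusp counting would then force a contradiction. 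In short, the hard step you flag is not merely hard---it is missing.

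The paper's argument is quite different and does not use a perimeter estimate at all. It exploits two facts specific to Subcase~2.a: first, the zigzag structure of $\mathcal I_{12}$ under the glide reflection $g_{12}$ forces that no sub-arc of $\mathcal I_{12}$ lies on $\partial\conv\mathcal D_1$; second, exactly one of $g_{13},g_{23}$ is orientation-preserving. Taking that one (say $g_{13}$), the contradiction hypothesis places $g_{31}(\mathcal S_3)$ on $\partial\conv\mathcal D_1$ disjoint from $\mathcal S_1$, hence (by the first fact) not inside $\mathcal I_{12}$; this pins $g_{31}(A_{13})$ to $\mathcal I_{13}$. The arc $\widehat{A_{13}\,g_{31}(A_{13})}\subset\mathcal I_{13}$ is then $g_{31}$-invariant, which for an orientation-preserving isometry forces $g_{13}$ to be either a rotation about $A_{13}$ or a point reflection---in either case $g_{13}^2=\mathrm{id}$, and one checks directly that a portion of $\mathcal S_1$ near $A_{13}$ is sent into $\mathcal S_3$, contradicting the hypothesis. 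This fixed-arc/involution mechanism is the idea you are missing.
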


\begin{proof}
Suppose for contradiction that $\mathcal S_3$ is disjoint from both curves, hence $g_{31}(\mathcal S_3)$ is disjoint (apart from possibly its endpoints) from $\mathcal S_1$.
As $\mathcal S_3\subseteq \partial\conv\mathcal D_3$, and no part of $\mathcal I_{12}$ is on $\partial\conv \mathcal D_1$, we clearly have $g_{31}(\mathcal S_3)\not\subseteq \mathcal I_{12}$. If $g_{13}$ is orientation-preserving, this yields that $g_{31}(A_{13})\in\relint \mathcal I_{13}$.

\begin{figure}[ht]
{\centering
\includegraphics[width=0.6\textwidth]{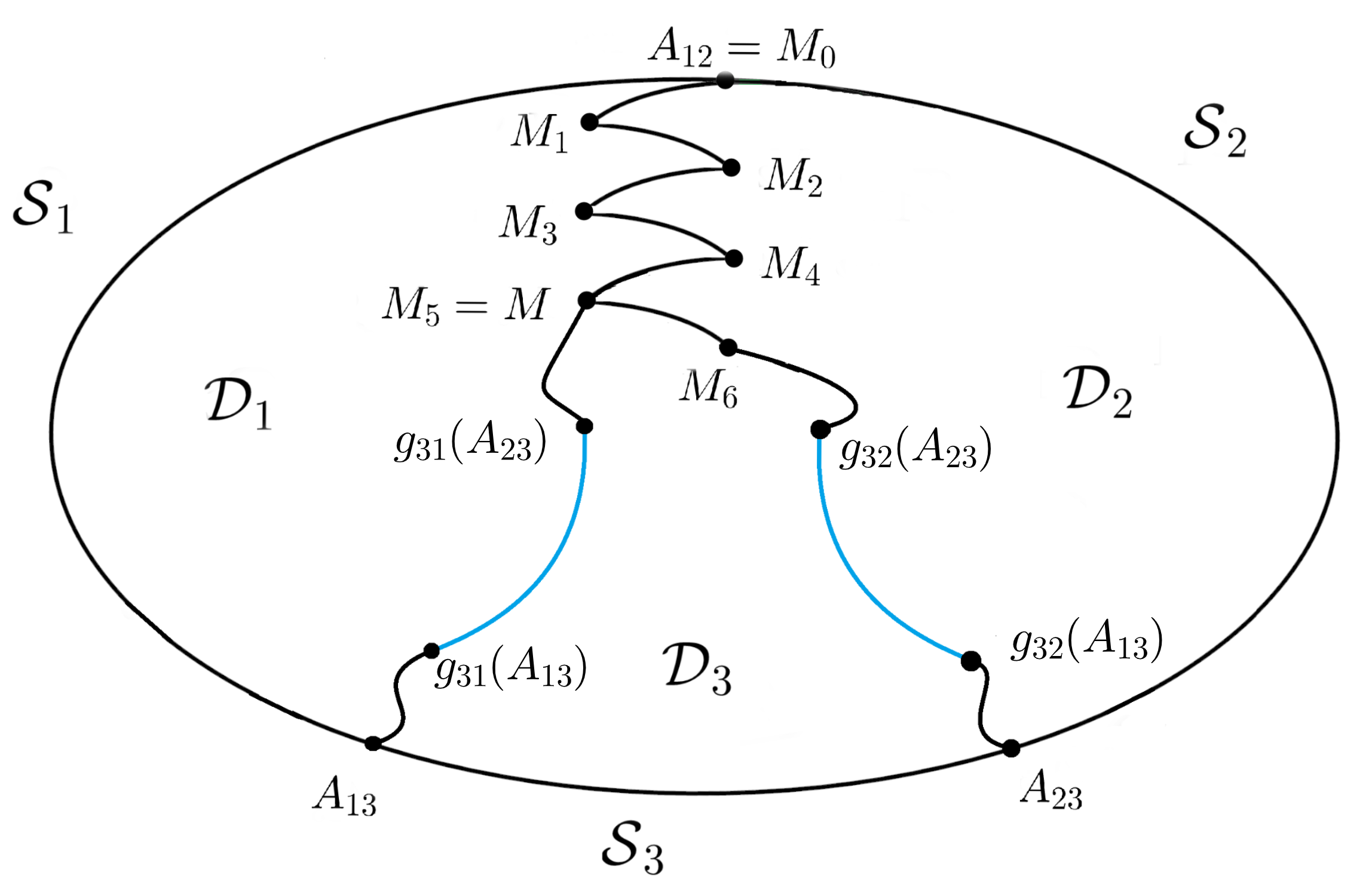}
\caption{\label{ReversingDisjointImg}}}
\end{figure}

If $g_{31}(A_{13})\neq A_{13}$, we obtain that the curve $\widehat{A_{13} g_{31}(A_{13})}$ is invariant under $g_{31}$, hence it's the point reflection about the midpoint of $\overline{A_{13}g_{31}(A_{13})}$. 
Then there is a part of the curve $\mathcal S_1$ in the neighbourhood of $A_{13}$ whose image under $g_{13}^2$ is on $\mathcal S_3$, a contradiction, as $g_{13}^2$ is the identity.

If $g_{31}(A_{13})=A_{13}$, $g_{13}$ is a rotation about $A_{13}$. 
Then there is a part of the curve $\mathcal S_1$ in the neighbourhood of $A_{13}$ whose image under $g_{13}^2$ is on $\mathcal S_3$, which is a contradiction due to the convexity of the curves.

If $g_{23}$ is orientation-preserving, then $\widehat{A_{23} g_{32}(A_{23})}$ is invariant under $g_{32}$, and we can use a similar argument to get a contradiction.
\end{proof}

\vspace{5pt}

Assume first that $g_{23}$ is orientation-reversing, hence $g_{13}$ is orientation-preserving. 
As we've already discussed the cases where $g_{ij}$ is orientation-preserving and $\length{\mathcal S_j\cap g_{ij}(\mathcal S_i)}>0$, we only present the cases where the intersection above is at most one point. 
Thus we have that $\length{\mathcal S_1 \cap g_{31}(\mathcal S_3)}= \length{g_{12}(\mathcal S_1) \cap g_{32}(\mathcal S_3)}=0$, and as $\mathcal S_3$ has an intersection of positive arc length with either $g_{13}(\mathcal S_1)$ or $g_{23}(\mathcal S_2)$, we have that $\length{\mathcal S_2\cap g_{32}(\mathcal S_3)}>0$.
From this, $\mathcal S_2\not\subseteq g_{12}(\mathcal S_1)$, hence $g_{12}(A_{13})\in\relint\mathcal S_2$.
This yields that $A_{13}$ is a cusp on $\partial\mathcal D_3$, thus $g_{32}(A_{13})$ can't be an interior point of $\mathcal S_2$. Hence their intersection is at most one point, which contradicts our assumptions.

From now on, we assume that $g_{13}$ is orientation-reversing -- hence a glide reflection --, $\length{\mathcal S_1\cap g_{31}(\mathcal S_3)}>0$ and $\length{\mathcal S_2\cap g_{32}(\mathcal S_3)}=0$, from which we have that $\mathcal S_1\not\subseteq g_{31}(\mathcal S_3)$.

\begin{proposition}
$g_{31}(A_{13})\in\relint\mathcal S_1$.
\end{proposition}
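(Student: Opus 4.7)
The plan is to argue by contradiction: suppose $g_{31}(A_{13})\notin\relint\mathcal S_1$. The first observation is that $\length{\mathcal S_1}=\length{\mathcal S_3}$ (from Lemma~\ref{equidecomp} applied to $\partial\mathcal D_1$ and $\partial\mathcal D_3$, together with smoothness of $\partial K$), so $g_{31}(\mathcal S_3)\not\subseteq\mathcal S_1$; otherwise the two sub-arcs would coincide, contradicting $\mathcal S_1\not\subseteq g_{31}(\mathcal S_3)$. Hence $\mathcal S_1$ and $g_{31}(\mathcal S_3)$ are sub-arcs of the Jordan curve $\partial\mathcal D_1$ of equal length that overlap in positive length, with neither containing the other. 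A routine topological argument then shows that one endpoint of the overlap is an endpoint of $\mathcal S_1$ (namely $A_{12}$ or $A_{13}$) while the other is an endpoint of $g_{31}(\mathcal S_3)$ lying in $\relint\mathcal S_1$. Consequently exactly one of $g_{31}(A_{13})$, $g_{31}(A_{23})$ lies in $\relint\mathcal S_1$, and by assumption it must be $g_{31}(A_{23})$; correspondingly $g_{31}(A_{13})$ lies in $\relint\mathcal I_{12}\cup\relint\mathcal I_{13}\cup\{M\}$, past one of the endpoints $A_{12}$, $A_{13}$.

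The next step exploits the smoothness of $\partial K$. Since $g_{31}(A_{23})\in\relint\mathcal S_1\subseteq\partial K$ is differentiable on $\partial\mathcal D_1$, the isometry $g_{31}$ forces $A_{23}$ to be differentiable on $\partial\mathcal D_3$. A short tangent-matching computation at the junction $\mathcal S_3\cap\mathcal I_{23}=\{A_{23}\}$ then forces $\mathcal I_{23}$ to be tangent to $\partial K$ at $A_{23}$ in the matching orientation. On $\partial\mathcal D_2$, however, $\mathcal I_{23}$ is traversed in the opposite direction, so the very same geometric configuration realises a \emph{cusp} at $A_{23}$ on $\partial\mathcal D_2$.

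To derive a contradiction we combine these facts with the hypothesis $\length{\mathcal S_2\cap g_{32}(\mathcal S_3)}=0$. Since $g_{32}(A_{23})$ is an endpoint of $g_{32}(\mathcal S_3)\subseteq\partial\mathcal D_2$, the hypothesis excludes $g_{32}(A_{23})\in\relint\mathcal S_2$ (else the sub-arc would extend into $\mathcal S_2$ producing positive-length overlap). Moreover $g_{32}(A_{23})$ is differentiable on $\partial\mathcal D_2$, since $A_{23}$ is differentiable on $\partial\mathcal D_3$ and $g_{32}$ is an isometry; in particular $g_{32}(A_{23})\neq A_{23}$, because $A_{23}$ is a cusp on $\partial\mathcal D_2$. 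Tracking the cusp types generated by $A_{23}$ and its images under the glide reflection $g_{12}$ (via the zigzag $A_{12}=M_0$, $M_1=g_{12}(M_0)$, $M_2=g_{12}(M_1),\ldots$ already set up in Subcase~2.a) now pins down all the non-differentiable points on $\partial\mathcal D_2$, and the resulting count of positive versus negative cusps on the three tile boundaries becomes incompatible.

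The main obstacle is this last cusp-counting step: producing the explicit numerical contradiction requires a case analysis according to whether $g_{31}(A_{13})$ lies in $\relint\mathcal I_{12}$, in $\relint\mathcal I_{13}$, or equals $M$, together with the finer distinction of whether each relevant boundary point is a positive cusp, a negative cusp, or an ordinary non-differentiable point -- in the same spirit as the detailed tables of Subcases~1.a--1.e in the proof of Theorem~\ref{three}.
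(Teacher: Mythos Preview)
Your argument has two genuine gaps.

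\textbf{The equal-length claim is unfounded.} You assert $\length{\mathcal S_1}=\length{\mathcal S_3}$ ``from Lemma~\ref{equidecomp} applied to $\partial\mathcal D_1$ and $\partial\mathcal D_3$, together with smoothness of $\partial K$''. But $\partial\mathcal D_1=\mathcal S_1\cup\mathcal I_{12}\cup\mathcal I_{13}$ and $\partial\mathcal D_3=\mathcal S_3\cup\mathcal I_{13}\cup\mathcal I_{23}$ share only $\mathcal I_{13}$, so Lemma~\ref{equidecomp} gives you only that $\mathcal S_1\cup\mathcal I_{12}$ and $\mathcal S_3\cup\mathcal I_{23}$ are equidecomposable. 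You would still need $\length{\mathcal I_{12}}=\length{\mathcal I_{23}}$, and that is precisely part of the conclusion of Theorem~\ref{three}---assuming it here is circular. Without equal lengths your ``neither contains the other'' step collapses, and the case $g_{31}(\mathcal S_3)\subsetneq\mathcal S_1$ is not handled.

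\textbf{The contradiction is never produced.} Your final paragraph is a description of work to be done (``requires a case analysis\ldots in the same spirit as the detailed tables of Subcases~1.a--1.e''), not a proof. Cusp-counting across three tiles with several undetermined points is delicate, and there is no reason to believe the numbers fall out without actually doing the computation.

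The paper's proof avoids both issues with a much shorter argument. It never needs $\length{\mathcal S_1}=\length{\mathcal S_3}$: if $g_{31}(A_{23})\notin\relint\mathcal S_1$ then positivity of the overlap together with $\mathcal S_1\not\subseteq g_{31}(\mathcal S_3)$ already forces $g_{31}(A_{13})\in\relint\mathcal S_1$. In the remaining case $g_{31}(A_{23})\in\relint\mathcal S_1$ it reaches the same cusp-at-$A_{23}$ observation you make, but then applies $g_{21}$ (not $g_{32}$) to $A_{23}$ and splits on whether $g_{21}(A_{23})=A_{13}$. If not, one obtains that \emph{both} $g_{31}(\mathcal S_3)$ and $g_{21}(\mathcal S_2)$ cover a portion of $\mathcal S_1$ near $A_{13}$; their overlap then has positive length, contradicting the standing hypothesis $\length{\mathcal S_2\cap g_{32}(\mathcal S_3)}=0$ (equivalently $\length{g_{21}(\mathcal S_2)\cap g_{31}(\mathcal S_3)}=0$ by the remark after Lemma~\ref{intersection}). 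If $g_{21}(A_{23})=A_{13}$, then $A_{13}$ is a cusp on $\partial\mathcal D_1$ and differentiable on $\partial\mathcal D_3$, so $g_{31}(A_{13})\in\relint\mathcal S_1$ directly. No cusp-counting table is needed.
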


\begin{proof}
If $g_{31}(A_{23})\notin\relint \mathcal S_1$, $\mathcal S_1\not\subseteq g_{31}(\mathcal S_3)$ directly yields $g_{31}(A_{13})\in\relint\mathcal S_1$.
If $g_{31}(A_{23})\in\relint\mathcal S_1$, $A_{23}$ is differentiable on $\partial\mathcal D_3$, hence a cusp on $\partial\mathcal D_2$, thus $g_{21}(A_{23})\notin\relint\mathcal S_1$. 
If $g_{21}(A_{23})\neq A_{13}$, then $A_{13}$ is a cusp on $\partial\mathcal D_3$, thus $g_{31}(A_{13})\notin\mathcal S_1$. This yields that both $g_{31}(\mathcal S_3)$ and $g_{21}(\mathcal S_2)$ contain a part of $\mathcal S_1$ of positive length in the neighbourhood of $A_{13}$, which contradicts our assumptions.
If $g_{21}(A_{23})=A_{13}$, $A_{13}$ is a cusp on $\partial\mathcal D_1$ and differentiable on $\partial\mathcal D_3$, hence we have $g_{31}(A_{13})\in\relint\mathcal S_1$. \end{proof}

\vspace{5pt}

\begin{figure}[ht]
{\centering
\includegraphics[width=0.6\textwidth]{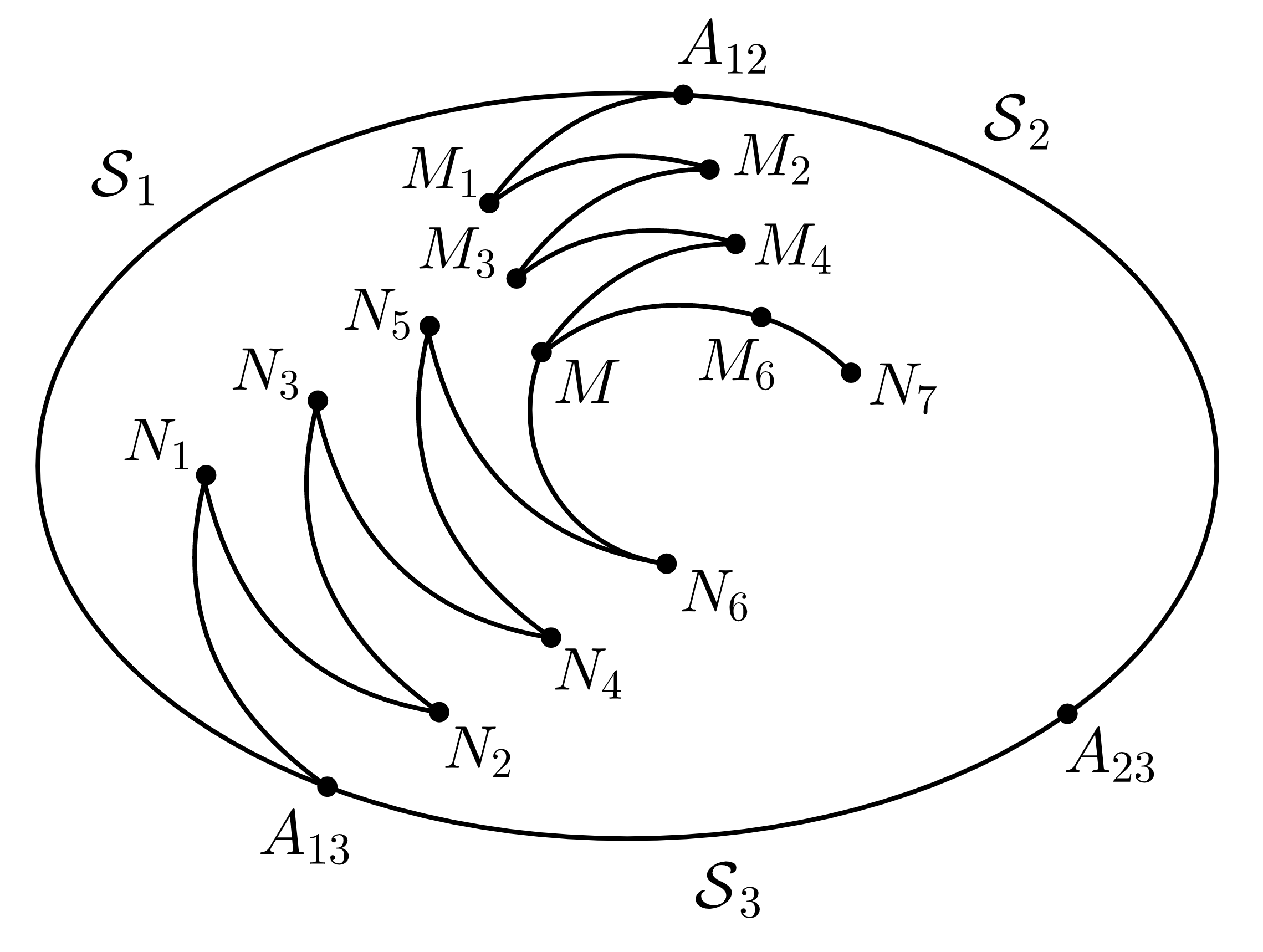}
\caption{\label{ReversingDoubleZigzag} $g_{12}$ and $g_{13}$ are glide reflections.}}
\end{figure}

\noindent Now, similarly to the construction of $\mathcal I_{12}$, we have $N_0=A_{13}$, $N_1=g_{13}(A_{13})$, and $N_{j+1}=g_{13}(N_j)$ if $N_j\in\mathcal I_{13}$, see Figure~\ref{ReversingDoubleZigzag}. 

If $M\in\relint{\widehat{M_{i-1}M_{i}}}$ and $M \in \relint{\widehat{N_{j-1}N_{j}}}$, the non-differentiable point closest to $M$ on $\mathcal I_{23}$ is $M_{i}=N_{j}$. This yields that $\widehat{M M_i} = \widehat{M N_j}$, where the former is the image of $\widehat{g_{21}^i(M) A_{12}}\subseteq \mathcal S_1$ under $g_{21}^i$, and the latter is the image of $\widehat{g_{31}^j(M) A_{13}}\subseteq \mathcal S_1$ under $g_{31}^j$.
Considering the convexity of the curves, we get that the parity of $i$ and $j$ must be different. Assume without loss of generality that $i$ is even and $j$ is odd. This yields that a translated image of $\widehat{g_{21}^i(M) A_{12}}$ coincides with the image of $\widehat{g_{31}^j(M) A_{13}}$ under a glide reflection. 
From this we have that the outer normal vector to $\partial K$ at $A_{12}$ coincides with the outer normal vector at $g_{31}^j(M)$. Given the strict convexity of $K$, this yields that $A_{12}=g_{31}^j(M)$, which implies that $g_{13}(\mathcal S_1)\cap \mathcal S_3=A_{13}$, a contradiction.

If $M=M_{i-1}=N_{j-1}$, $M$ is a cusp on all three tiles, which yields that $M_i$ and $N_j$ are both cusps as well, hence $M_i=N_j$, and we use a similar argument as in the previous case.

If $M=M_{i-1}$ and $M\in\relint \widehat{N_{j-1}N_j}$, we have $\widehat{M_{i-1}M_i} \subsetneq \widehat{N_{j-1}N_j}$. This yields that $g_{12}( \widehat{M_{i-2} N_{j-1}}) = \widehat{M N_j}$. This is a contradiction, as if $N_{j-1}$ is a positive cusp on $\partial\mathcal D_3$, $N_j$ is negative, and vice versa, thus they aren't identical on $\partial\mathcal D_1$ and $\partial\mathcal D_2$.

\noindent {\bf Subcase 2.b} $\; g_{12}$ is a reflection.

In this case, $g_{12}$ is a reflection about the line $A_{12}M$, and $\mathcal I_{12}$ coincides with the line segment $\overline{A_{12}M}$.
Exactly one of the isometries $g_{13}$ and $g_{23}$ is orientation-reversing, assume without loss of generality that it is the former. As seen in the previous part of the proof, this isn't a  glide reflection, thus it is a reflection: more precisely, the reflection about the line $A_{13}M$. 
This yields that $\mathcal I_{13}$ coincides with the line segment $\overline{A_{13}M}$. 
Thus we obtain that each tile consists of a convex differentiable curve congruent to $\mathcal S_1$, and two line segments. 
Hence $\mathcal I_{23}$ can't contain a stricly concave curve on neither $\partial\mathcal D_2$ nor $\partial\mathcal D_3$, thus it is also a line segment. 
It follows that $g_{13}(\mathcal S_1)=g_{23}(\mathcal S_2)=\mathcal S_3$; that the line segments $\mathcal I_{ij}$ $(1\leq i<j\leq 3)$ are of equal length, and as the angle between any two of them is the same, that angle is $120^\circ$.

\begin{remark}\label{NonNormalTopII}
If $\length{\mathcal S_3}=0$, the tiles have two points of intersection: one on the boundary of $K$, and one in the interior. We denote the point in the interior by $M$, and use the same further notation as in the case $\length{\mathcal S_3}>0$. Here, we have that $A_{13}=A_{23}$, and we will refer to this point as $A$ for simplicity.
In Subcases 1.a-1.d, the calculations hold in the same way as above. In Subcase 1.e, $g_{12}^2$ has two fixed points: $A_{12}$ and $A$. As $g_{12}$ is orientation-preserving, this yields that $g_{12}$ is the point reflection about the midpoint of $\overline{A_{12}A}$. From this we have $\mathcal J_1=\mathcal J_3$, a contradiction.

Now, consider Subcase 2.a.
If $g_{31}(A)\in\relint\mathcal S_1$, $A$ is a cusp on both $\partial\mathcal D_1$ and $\partial\mathcal D_2$, which yields that $A$ is a fixed point of $g_{12}$, a contradiction.
If $g_{31}(A)\in\mathcal I_{13}\setminus \{A\}$, we obtain a contradiction similarly to Proposition~\ref{ReversingDisjoint}.
If $g_{31}(A)=A$, $g_{13}$ is a rotation around $A$, hence the angle of $A$ on $\partial\mathcal D_1$ is equal to the angle of the rotation. If $g_{13}$ is a point reflection, we get a contradiction similarly to Proposition~\ref{ReversingDisjoint}; otherwise we have that $A$ is an ordinary non-differentiable point on $\partial\mathcal D_1$, thus $g_{12}(A)\notin \relint \mathcal S_2$. $g_{12}(A)=A$ is a contradiction as $g_{12}$ has no fixed points; $g_{12}(A)\notin\mathcal S_2$ yields that $A$ is a cusp on $\partial\mathcal D_1$, which contradicts the fact that it is ordinary.

\begin{figure}[ht]
{\centering
\includegraphics[width=\textwidth]{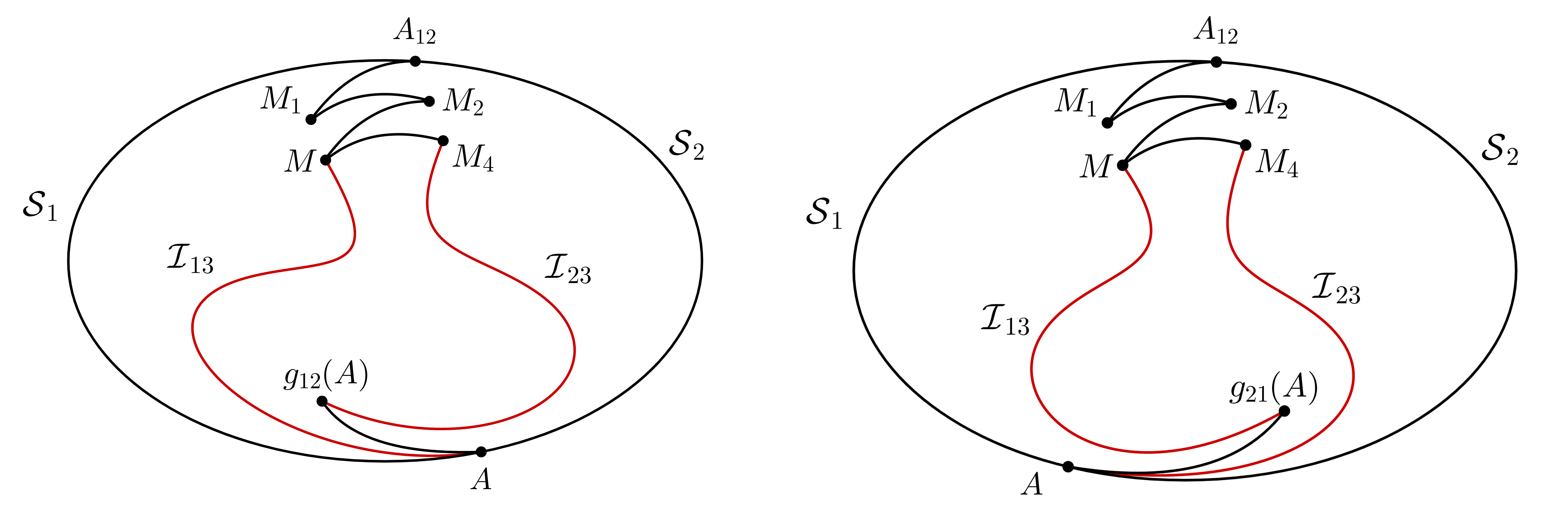}
\caption{\label{S3.0}}
}
\end{figure}

Lastly, consider the case $g_{31}(A)\in \mathcal I_{12}$. As $A\in\partial \conv \mathcal D_3$, and the only points on $\mathcal I_{23}$ that can be on $\partial \conv \mathcal D_1$ are $M_{2i}$, we have that $g_{31}(A)=M_{2i}$ for an integer $i$.

It is easy to see that $g_{12}(A)\in\mathcal I_{12}$ or $g_{21}(A)\in\mathcal I_{12}$ yields a contradiction due to the length of the boundaries.
If $g_{12}(A)\in\relint \mathcal  I_{23}$, the convexity of the curves yields that $g_{13}$ is orientation-reversing, thus $g_{23}$ is orientation-preserving. It is easy to see that as $g_{12}(\mathcal I_{13})\subsetneq \mathcal I_{23}$, see Figure~\ref{S3.0}. Hence we have $\length{\mathcal I_{13}}<\length{\mathcal I_{23}}$. This yields that $\mathcal I_{23}\cap g_{23}(\mathcal I_{23})$ is a non-degenerate curve, and its image under $g_{23}$ is itself. Then $g_{23}$ is a point-reflection, a contradiction. 

If $g_{12}(A)\in\relint\mathcal S_2$, thus $g_{21}(A)\in\relint\mathcal I_{13}$, the convexity of the curves yields that $g_{13}$ is orientation-preserving. Similarly to our previous arguments, if the image of $\mathcal S_1 \cup g_{21}(\mathcal S_2)$ or $\widehat{g_{21}(A)M}$ under $g_{13}$ has a non-degenerate intersection with $\widehat{g_{21}(A)M}$, $g_{13}$ is a point reflection, and we have a contradiction. This yields that only $g_{13}(\mathcal I_{12})$ can be on $\widehat{g_{21}(A)M}$. Comparing the length of the boundaries, it is easy to see that $\length{\mathcal I_{12}} < \length{ \widehat{ g_{21}(A) M}}$, thus this is a contradiction as well.
\end{remark}

\section {Some partial results on non-normal tilings}\label{NonNormal}

In this section we prove a partial result for non-normal tilings. We assume that $K$ is centrally symmetric, and it is unclear if this case can occur or not, see the Concluding remarks.

\begin{theorem}\label{thm:partial}
Let $K$ be a centrally symmetric, strictly convex disc with a continuously differentiable boundary, and $\{ \mathcal D_1, \mathcal D_2, \mathcal D_3 \}$ be a non-normal tiling of $K$. Assume that the boundary of the tile $\mathcal D_i$ is piecewise differentiable for $i=1,2,3$, and that $\mathcal D_1\cap \mathcal D_2$ is not connected. Then $\mathcal S_3=\emptyset$, the isometry $g_{12}$ is a rotation, while $g_{13}$ and $g_{23}$ are glide reflections.
\end{theorem}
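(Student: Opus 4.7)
My plan is to prove Theorem~\ref{thm:partial} in three stages: first reduce to $\mathcal{S}_3=\emptyset$, then identify $g_{12}$ as a rotation, and finally show that both $g_{13}$ and $g_{23}$ are glide reflections.

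\emph{Stage 1 (reduction to $\mathcal{S}_3=\emptyset$).} If all three $\mathcal{S}_i$ were non-degenerate and connected we would be in topological Case~II, and Lemma~\ref{threebasis} would give $\mathcal{D}_1\cap\mathcal{D}_2$ as a single simple curve, contradicting the hypothesis. Together with Lemma~\ref{connected} and the degenerate variants handled in Remark~\ref{NonNormalTopI}, this forces exactly one $\mathcal{S}_i$ to be empty. To rule out $\mathcal{S}_1=\emptyset$ (the case $\mathcal{S}_2=\emptyset$ being symmetric), I would observe that $\partial\mathcal{D}_1\subset\inter K$ is then a Jordan curve split by triple-junctions into arcs alternating between $\mathcal{D}_1\cap\mathcal{D}_2$ and $\mathcal{D}_1\cap\mathcal{D}_3$; disconnectedness of $\mathcal{D}_1\cap\mathcal{D}_2$ forces at least four such triple-junctions. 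A cusp-bookkeeping argument on the three congruent piecewise-differentiable Jordan curves $\partial\mathcal{D}_1,\partial\mathcal{D}_2,\partial\mathcal{D}_3$, modelled on the $k=3,4$ analyses of Lemma~\ref{connected} and the signed cusp accounting of Subcase~1.c of Theorem~\ref{three}, then produces a parity contradiction between the positive and negative cusp counts. I expect this cusp count to be the main technical obstacle.

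\emph{Stage 2 ($g_{12}$ is a rotation).} With $\mathcal{S}_3=\emptyset$, the arcs $\mathcal{S}_1$ and $\mathcal{S}_2$ cover $\partial K$ and meet at two points $A,B$, while $\partial\mathcal{D}_3\subset\inter K$ is a Jordan curve of arcs alternating between $\mathcal{D}_3\cap\mathcal{D}_1$ and $\mathcal{D}_3\cap\mathcal{D}_2$ at the triple-junctions. A perimeter comparison via Lemma~\ref{equidecomp} together with Lemma~\ref{finite} forces exactly two triple-junctions $P,Q$, so that $\mathcal{D}_1\cap\mathcal{D}_2$ has exactly two components $\mathcal{I}'=\widehat{AP}$ and $\mathcal{I}''=\widehat{QB}$. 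To rule out $g_{12}$ being orientation-reversing I would handle the two sub-cases: a glide reflection triggers the zigzag construction of Subcase~2.a of Theorem~\ref{three}, producing infinitely many congruent disjoint subarcs of $\mathcal{I}'\cup\mathcal{I}''$ and contradicting Lemma~\ref{finite}; a reflection is eliminated by case-analysis of its action on the vertex set $\{A,B,P,Q\}$ using the central symmetry of $K$ (the axis of a reflection swapping $\mathcal{D}_1$ and $\mathcal{D}_2$ must be a line of symmetry of $K$, which together with the point-symmetry of $K$ leaves only configurations where $\mathcal{I}'\cup\mathcal{I}''$ degenerates to segments and the two boundary arcs of $\partial\mathcal{D}_3$ collapse). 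Therefore $g_{12}$ is orientation-preserving; since it swaps $A\leftrightarrow B$ and is not a translation, $g_{12}$ is a rotation.

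\emph{Stage 3 ($g_{13}$ and $g_{23}$ are glide reflections).} Because $g_{12}$ is orientation-preserving, $g_{13}$ and $g_{23}$ share orientation class. If both were orientation-preserving then the group $G=\langle g_{12},g_{13}\rangle$ would consist of orientation-preserving plane isometries and would surject onto $S_3$ via its action on the three tiles (since $g_{12}$ acts as the transposition $(12)$ and $g_{13}$ as $(13)$). The kernel of this action is contained in the finite symmetry group of any single tile, hence $G$ itself is finite; but every finite group of orientation-preserving plane isometries is cyclic, hence abelian, contradicting that its quotient $S_3$ is non-abelian. Therefore both $g_{13}$ and $g_{23}$ are orientation-reversing. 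To exclude reflections I would adapt the argument of Subcase~2.b of Theorem~\ref{three}: a reflection $g_{i3}$ forces $\mathcal{D}_i\cap\mathcal{D}_3$ to lie on its axis and hence be a line segment from $P$ to $Q$, and comparing the resulting cusp count of $\partial\mathcal{D}_3$ with the four non-differentiable points $A,B,P,Q$ of the congruent $\partial\mathcal{D}_1$ via $g_{i3}$ yields a contradiction. Hence $g_{13}$ and $g_{23}$ are both glide reflections.
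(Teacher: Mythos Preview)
Your Stage~3 contains a fatal gap. The isometries $g_{ij}$ satisfy only $g_{ij}(\mathcal D_i)=\mathcal D_j$; nothing forces $g_{12}(\mathcal D_2)$ or $g_{12}(\mathcal D_3)$ to be one of the three tiles, so the generators do not act on $\{\mathcal D_1,\mathcal D_2,\mathcal D_3\}$ by permutations and there is no homomorphism $G=\langle g_{12},g_{13}\rangle\to S_3$. The sentence ``$g_{12}$ acts as $(12)$'' already presupposes $g_{12}(\mathcal D_2)=\mathcal D_1$, i.e.\ that $g_{12}$ is an involution---but a rotation is an involution only if it is a point reflection, which you have not established and which the theorem does not assert. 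Even the purely formal assignment $g_{12}\mapsto(12)$, $g_{13}\mapsto(13)$ cannot extend to a homomorphism: if $g_{12}$ has order larger than~$2$, the relation $(12)^2=e$ in $S_3$ has no counterpart in $G$, so the ``kernel'' you invoke is not well defined and the finiteness conclusion collapses. The paper rules out orientation-preserving $g_{13}$ by an entirely different, hands-on geometric argument (its Subcase~4.a): it first shows that $g_{13}$ would have to be the point reflection about the midpoint of $\overline{MN}$, and then tracks the images of $\mathcal S_2$ under $g_{23}$ and $g_{21}$ along $\mathcal D_1\cap\mathcal D_3$ to contradict the fact that $g_{12}$ is not a point reflection.

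Stage~2 also has real gaps. The claim that $g_{12}$ ``swaps $A\leftrightarrow B$'' is unjustified and in general false: the paper's Case~4 is largely devoted to the situation $g_{12}(\mathcal S_1)\neq\mathcal S_2$, where $g_{12}(\{A,B\})\neq\{A,B\}$, so this swap cannot be used to exclude translations (the paper excludes translations via a total-curvature argument, its Case~2). Your glide-reflection plan misreads Subcase~2.a of Theorem~\ref{three}: there the zigzag \emph{terminates} after finitely many steps by Lemma~\ref{finite}, and the eventual contradiction comes only from comparing it with a \emph{second} zigzag built from another isometry $g_{13}$; a single zigzag driven by $g_{12}$ alone does not produce infinitely many congruent subarcs. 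In the present non-normal setting the paper eliminates glide reflections by a different argument that uses the central symmetry of $K$ essentially (its Case~3; in particular Subcase~3.b exploits the identity $\tau_o\circ g_{12}\circ\tau_o=g_{21}$).
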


\begin{proof}

We denote the center of symmetry of $K$ by $O$, and the point reflection about $O$ by $\tau_o$. 
By Remarks \ref{NonNormalTopI} and \ref{NonNormalTopII}, we obtain a contradiction for the non-normal tilings of topological cases I and II (see Figure~{\ref{TopCases}}).
In case III it readily follows that $\mathcal S_3$ is empty. By Lemma~\ref{intersection}, we have that $\length{g_{12}(\mathcal S_1)\cap \mathcal S_2}>0$. 
Let the two points of intersection of the curves $\mathcal S_1$ and $\mathcal S_2$ be $A$ and $B$, and the two intersection point of all three tiles be $M$ and $N$. We distinguish cases according to the type of $g_{12}$.

\noindent{\bf Case 1} $\;g_{12}$ is a reflection.

Let the axis of reflection be $t$. Note that $t$ cannot contain an interior point of $\mathcal D_1$ or $\mathcal D_2$, thus $\mathcal D_1 \cap \mathcal D_2$ consists of two line segments contained in $t$, and $t$ coincides with the line $AB$.
The convex hulls of $\mathcal D_1$ and $\mathcal D_2$ are, respectively, the topological discs enclosed by $\mathcal S_1$ and $\overline{AB}$, and $\mathcal S_2$ and $\overline{AB}$.
Now, $\mathcal D_3$ is symmetric about the line $AB$, thus $\mathcal D_1$ is axially symmetric about some line $l$ as well. As the convex hull of $\mathcal D_1$ is symmetric about $l$, $l$ must be the bisector of the line segment $\overline{AB}$. This yields that $\mathcal D_3$ has two lines of symmetry -- $AB$ and the bisector of $\overline{AB}$--, while $\mathcal D_1$ only has one, a contradiction.

\noindent {\bf Case 2} $\;g_{12}$ is a translation. 

For a convex differentiable arc $\mathcal C$ we denote by $\tc \mathcal C$ the usual total curvature of $\mathcal C$, that is $\tc \mathcal C$ is the positive turning angle of the tangent vectors at the endpoints of $\mathcal C$. We may assume without loss of generality that $\tc (\mathcal S_1) \leq \tc(\mathcal S_2)$, thus $\tc (\mathcal S_2)\geq \pi$. This yields that there exists a point $A'\in \mathcal S_2$ that satisfies the property $\tc(AA')=\pi$. 
The image of $\overline{AA'}$ under $g_{21}$ is a line segment parallel to $\overline{AA'}$ that is contained in the convex hull of $\mathcal D_1$, thus contained in $K$. 
As the tangents to $\partial K$ at $A$ and $A'$ are parallel to each other, this can only occur if $g_{21}(\overline{AA'}) = \overline{AA'}$, which yields that $g_{21}$ is the identity, a contradiction.

\noindent {\bf Case 3} $\;g_{12}$ is a glide reflection.

In this case, assume that $\length{\mathcal S_1}\leq \length{\mathcal S_2}$. We distinguish two further subcases.

\noindent {\bf Subcase 3.a} $\;g_{12}(\mathcal S_1)\subseteq\mathcal S_2$.

Here we have $\tc (\mathcal S_1) \leq \tc(\mathcal S_2)$, and $\tc (\mathcal S_2)\geq \pi$.

As the total curvature of $\widehat{g_{12}(A)g_{12}(B)} \subseteq \mathcal S_2$ is equal to $\tc(\mathcal S_1)$, we  have that on $\mathcal S_2$, $\tc(\widehat{Bg_{12}(A)})+ \tc(\widehat{g_{12}(B)A}) =\tc(\mathcal S_1)+ \tc(\mathcal S_2) = 2\pi$, thus either $\tc(\widehat{Bg_{12}(A)})\geq \pi$ or $\tc(\widehat{g_{12}(B)A})\geq \pi$ holds. Assume that $\tc(\widehat{g_{12}(B)A})\geq \pi$.
Then there exists a point $B'\in \widehat{Ag_{12}(A)}$ such that $\tc(\widehat{g_{12}(B)B'})=\pi$ holds.
Then, $g_{21}(g_{12}(B)) = B\in\mathcal S_2$, and $g_{21}(B')\in \partial\conv \mathcal D_1\setminus \mathcal S_1$, which is a subset of $\conv \mathcal D_2$. Hence we have that $g_{21}( \widehat{g_{12}(B) B')} \subset \conv \mathcal D_2$, and $g_{21}^2 (\widehat{g_{12}(B) B')} \subset \conv \mathcal D_1\subset K$. As $g_{21}^2$ is a translation and the tangent lines to $\partial K$ at $g_{12}(B)$ and $B'$ are parallel to each other, this yields that $g_{21}$ has a translation vector of $\mathbf 0$, which contradicts our assumptions.

\noindent {\bf Subcase 3.b}  $\;g_{12}(\mathcal S_1)\not\subseteq \mathcal S_2$.

Let $t$ denote the axis of reflection of $g_{12}$. 
Let $X$ and $X'$ denote the points on $\partial K$ where the tangents are parallel to $t$, and assume that $X\in \mathcal S_1$. 
Then $g_{12}(X)=X'$, hence $X'\in\mathcal S_2$. As $g_{12}(X)\neq g_{21}(X)$, we have $g_{21}(X)\notin K$, so $X\notin \mathcal S_2$, and similarly, $X'\notin \mathcal S_1$. 
The midpoint of the line segment $\overline{XX'}$ is $O$, thus $O\in t$.
Consider the lines parallel to $t$ passing through $A,\,B,\, g_{21}(A)$ and $g_{12}(B)$. These lines determine four, pairwise disjoint curves on $\partial K$ that we denote by $\mathcal C_i$ $(i=1,2,3,4)$, see Figure~\ref{Case33b}. Note that the light yellow strips on Figure~\ref{Case33b} are symmetric about $t$.

\begin{figure}[h]
{\centering
\includegraphics[width=0.75\textwidth]{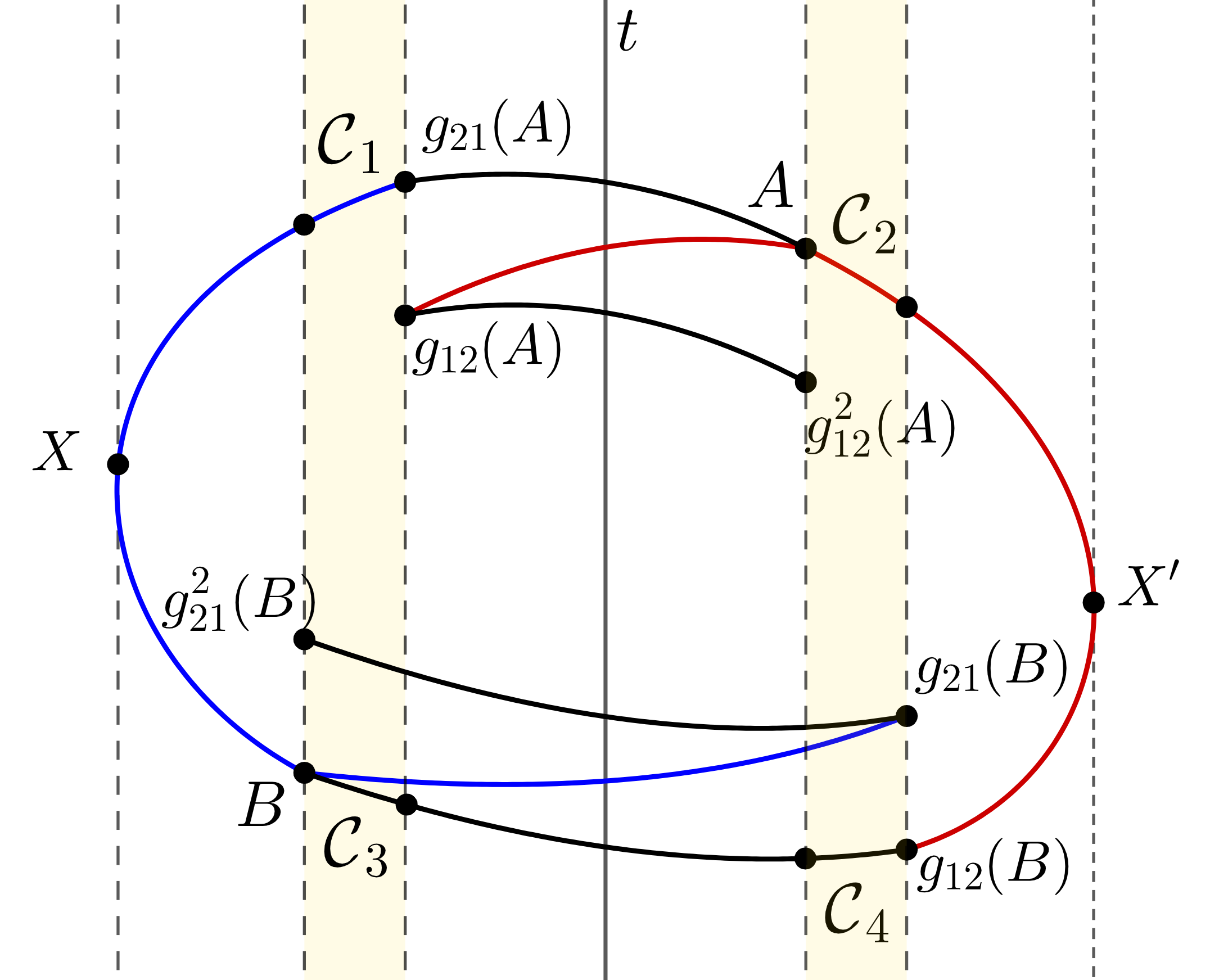}
\caption{\label{Case33b}}
}
\end{figure}

Then we have that $\tau_o( g_{12}( \tau_o( \mathcal C_4))) = \mathcal C_3$, and as $O\in t$, $\tau_o\circ g_{12}\circ \tau_o=g_{21}$. Thus $g_{21}(\mathcal C_4)=\mathcal C_3$, and since $\mathcal C_3 \subseteq \mathcal S_2$, this only holds if $\mathcal C_3$ --and subsequently, every $\mathcal C_i$-- consists of a single point. 
This yields that $A$ and $B$ are antipodal points (that is, $A$ and $B$ are centrally symmetric with respect to $O$), $\tc(\mathcal S_1) = \tc(\mathcal S_2) = \pi$, and $\widehat{Ag_{12}(A)} \cong \widehat{Bg_{21}(B)}$, hence the roles of $\mathcal D_1$ and $\mathcal D_2$ are interchangeable. 
Thus we may assume without loss of generality that $g_{13}$ is an orientation-preserving isometry.
We consider three cases based on the position of $g_{13}(\mathcal D_1\cap \mathcal D_3)$.

First, assume $g_{13}(\mathcal D_1\cap\mathcal D_3)=\mathcal D_1\cap \mathcal D_3$. In this case, we have $g_{13}(M)=N$ and $g_{13}(N)=M$. This yields that $g_{13}$ must be the reflection through the midpoint of $\overline{MN}$. 
As the tangent lines to $\partial K$ at $A$ and $B$ are parallel to each other, $g_{13}(\overline{AB})$ can't be in the interior of $K$, which contradicts $\mathcal D_3 \subset \inter K$.

Secondly, if we have $g_{13}(\mathcal D_1\cap\mathcal D_3)=\mathcal D_2\cap\mathcal D_3$, both $M$ and $N$ are fixed points of $g_{13}$, which, in the case on an orientation-preserving isometry, yields that $g_{13}$ is the identity, a contradiction.

Lastly, consider the case where $g_{13}(M)\in\relint (\mathcal D_1\cap\mathcal D_3)$. 
Then the image of curve $\widehat{Mg_{13}(M)}$ under $g_{13}$ is itself, hence $g_{13}$ is the reflection through the midpoint of $\overline{Mg_{13}(M)}$. 
Considering a point $Y\in\mathcal D_1\cap\mathcal D_3$ in a suitably small neighbourhood of $g_{13}(M)$ in a way that $Y\notin g_{13}(\mathcal D_1\cap\mathcal D_3)$, we have that $g_{13}(Y)\in\relint(\mathcal D_2\cap\mathcal D_3)$ and $g_{31}(Y)\in\partial\mathcal D_1\setminus \mathcal D_3$,
but $g_{13}(Y)=g_{31}(Y)$, a contradiction.

\noindent {\bf Case 4} $\;g_{12}$ is a rotation.

First, consider the case where $g_{12}(\mathcal S_1)=\mathcal S_2$. In this case, we have that $A$ and $B$ are antipodal, and $g_{12}$ is the point reflection about $O$. This yields that $\mathcal D_3$ is symmetric about $O$, thus $\mathcal D_1$ is also symmetric about some point $P$. The reflected image of $\overline{AB}$ about $P$ is a line segment parallel to $\overline{AB}$, in $\mathcal D_1$. As $A$ and $B$ are antipodal, this can only be the case if $P=O$, which is clearly a contradiction.

From now on, we have $g_{12}(\mathcal S_1)\neq \mathcal S_2$. Without loss of generality, we may assume that $\length{\mathcal S_1}\leq \length{\mathcal S_2}$, and $g_{21}(B)\notin \mathcal S_1$. This yields that $A$ is a cusp on $\partial\mathcal D_2$, thus $g_{21}(A)\notin \relint \mathcal S_1$. 
This implies that $\mathcal S_1 \subsetneq g_{21}(\mathcal S_2)$,  hence $\length{\mathcal S_2}$ is strictly greater than $\length{\mathcal S_1}$, and $\tc (\mathcal S_1) < \tc (\mathcal S_2)$

We will present the case $g_{21}(A), g_{21}(B)\in \mathcal D_1\cap \mathcal D_2$. The case where $g_{21}(A)$ or $g_{21}(B)$ is an interior point of $\mathcal D_1\cap \mathcal D_3$ can be examined similarly.

\noindent {\bf Subcase 4.a} $\;g_{13}$ is orientation-preserving.

As $\partial\mathcal D_1$ and $\partial\mathcal D_2$ are of equal length, we have $\length{\mathcal D_1\cap \mathcal D_3} > \length{\mathcal D_2\cap \mathcal D_3}$, thus the intersection of $\mathcal D_1\cap \mathcal D_3$ with its image under $g_{13}$ is a non-degenerate curve.
Similarly to Subcase 3.b, if $g_{13}(M)$ or $g_{13}(N)$ is an interior point of $\mathcal D_1\cap \mathcal D_3$, we get a contradiction. Hence the image of $\mathcal D_1\cap \mathcal D_3$ under $g_{13}$ must be itself, and $g_{13}(M)=N$, $g_{13}(N)=M$. In the case of an orientation-preserving isometry, this yields that $g_{13}$ is the reflection through the midpoint of the line segment $\overline{MN}$.
From this we have that $g_{13}(g_{21}(\mathcal S_2))=g_{23}(\mathcal S_2)\subseteq \mathcal D_2\cap \mathcal D_3$.
We will examine the image of the curve $g_{23}(\mathcal S_2)$ under $g_{23}$ and $g_{21}$. See Figure~\ref{Case34a}.

\begin{figure}[h]
{\centering
\includegraphics[width=0.7\textwidth]{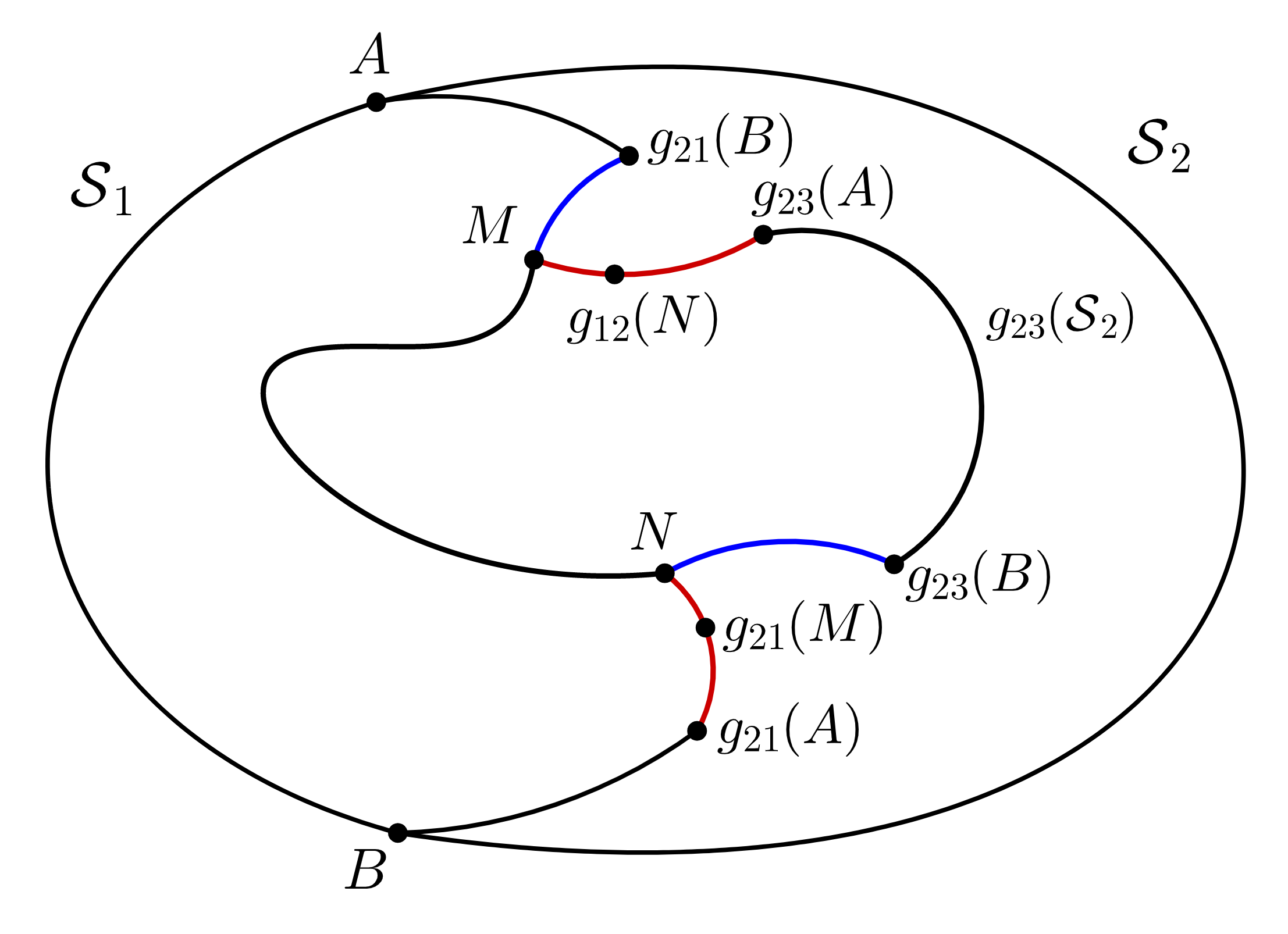}
\caption{\label{Case34a}}
}
\end{figure}

$g_{23}(\mathcal S_2)$ begins at a distance of $\length{\widehat{AM}}+ \length{ \widehat{Mg_{23}(A)}}$ from $A$ of $\partial\mathcal D_2$, thus $g_{23}^2(\mathcal S_2)$ is on $\mathcal D_1\cap \mathcal D_3$, beginning at a distance of $\length{\widehat{AM}}$ from $M$.
On the other hand, $g_{23}(\mathcal S_2)$ begins at a distance of $\length{\widehat{BN}} + \length{ \widehat{Ng_{23}(B)}}$ from $B$, and $\length{Ng_{23}(B)}=\length{g_{21}(B)M}$, thus $g_{21}(g_{23}(\mathcal S_2))$ is on $\mathcal D_1\cap \mathcal D_3$, beginning at a distance of $\length{\widehat{BN}}$ from $M$.
As the convexity of $g_{23}^2(\mathcal S_2)$ and $g_{21}(g_{23}(\mathcal S_2))$ is different, the difference between the lengths is at least $\length{\mathcal S_2}$. 
Without loss of generality, we obtain that $\length{\widehat{BN}} \geq \length{\widehat{AM}} + \length{\mathcal S_2} > \length{\widehat{AM}} + \length{\widehat{Bg_{21}(A)}}$. From this, $\length{\widehat{g_{21}(A)N}} > \length{\widehat{AM}}$. This yields that $g_{21}(M)\in \relint \widehat{g_{21}(A) N}$, hence $g_{12}(N)\in \relint \widehat{M g_{23}(A)}$, and we see that $g_{12}(N) = g_{13} (g_{21}(M))$.
Thus we have that the image of $\widehat{g_{21}(M)N}$ under $g_{12}$ is $\widehat{M g_{12}(N)}$, and under $g_{13}$ it's $\widehat{g_{12}(N) M}$. As $g_{13}$ is a point reflection, the second property yields that the line segment $\overline{g_{21}(M)N}$ is parallel to $\overline{g_{12}(N)M}$, but as $g_{12}$ is not a point reflection, this is a contradiction.

\noindent {\bf Subcase 4.b} $\; g_{13}$ is orientation-reversing.

If $g_{13}$ is a reflection, $\mathcal D_1\cap \mathcal D_3$ is a line segment, hence $g_{23}(\mathcal S_2)\subseteq \mathcal D_2\cap \mathcal D_3$. This yields that $g_{23}^2(\mathcal S_2)\subset \mathcal D_3\subset K$, which is a contradiction similarly to case 1.

If $g_{23}$ is a reflection, we clearly have that the axis of reflection has no common points with $\mathcal S_2$. As $\tc (\mathcal S_2) > \pi$, it is easy to see that $g_{23}(\mathcal S_2)\not \subset K$.

\end{proof}

\section{Concluding remarks}

In Theorem~{\ref{thm:partial}}, we state a positive result, however, we conjecture that topological case III cannot occur, even without the assumption that $K$ is centrally symmetric. We formulate the following conjecture.

\begin{conj}
Theorem~\ref{three} holds true for non-normal monohedral tilings as well.
\end{conj}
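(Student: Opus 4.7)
The text preceding the conjecture already reduces matters to topological case III, where $\mathcal S_3=\emptyset$ and $\mathcal D_3$ is disjoint from $\partial K$, since Remarks~\ref{NonNormalTopI} and~\ref{NonNormalTopII} rule out non-normal tilings in the other topological cases. In case~III, $\partial K$ is the union of two arcs $\mathcal S_1$ and $\mathcal S_2$ meeting at points $A,B$, and $\mathcal D_1\cap\mathcal D_2$ consists of two curves joining $A,B$ to points $M,N\in\partial\mathcal D_3$. My strategy is to follow the case split of Theorem~\ref{thm:partial} according to the type of the isometry $g_{12}$ --- reflection, translation, glide reflection, or rotation --- and, for each case, either extract the genuine use of central symmetry from the original argument and bypass it, or show that the tiling itself already forces $K$ to possess the symmetry needed, so that Theorem~\ref{thm:partial} applies directly.

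Several cases fall into the ``forcing symmetry'' strategy cleanly. If $g_{12}$ is a reflection with axis $t$, then $\inter\mathcal D_1$ and $\inter\mathcal D_2$ must lie in the opposite open half-planes bounded by $t$ (otherwise a point of $t$ would sit in both interiors), so $\mathcal D_1\cap\mathcal D_2\subseteq t$; this forces $t$ to be the line $AB$. Matching the tangent of $g_{12}(\mathcal S_1)$ at $A$ with the tangent directions available in $\partial\mathcal D_2$ at $A$, under strict convexity and smoothness of $\partial K$, then forces $g_{12}(\mathcal S_1)=\mathcal S_2$, whence $K$ is symmetric about $AB$ and Theorem~\ref{thm:partial} Case~1 applies. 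Similarly, if $g_{12}$ is a rotation with $g_{12}(\mathcal S_1)=\mathcal S_2$, then $g_{12}$ must swap $A$ and $B$, so $g_{12}^2$ has two fixed points and is hence the identity; therefore $g_{12}$ is a half-turn about the midpoint of $AB$, and $K$ is centrally symmetric. For the translation case and for the glide-reflection subcase $g_{12}(\mathcal S_1)\subseteq\mathcal S_2$, the arguments of Theorem~\ref{thm:partial} rely only on total-curvature comparisons and on the parallelism of tangent vectors at pairs of boundary points, neither of which uses central symmetry of $K$; they should transfer with minor rewording.

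The principal obstacle consists of the remaining subcases: $g_{12}$ a glide reflection with $g_{12}(\mathcal S_1)\not\subseteq\mathcal S_2$, and $g_{12}$ a rotation with $g_{12}(\mathcal S_1)\neq\mathcal S_2$. In Theorem~\ref{thm:partial} these are handled via the conjugation relation $\tau_o\circ g_{12}\circ\tau_o=g_{21}$, which requires the centre of $K$ to lie on the glide axis or at the rotation centre. My intended replacement is a direct zig-zag argument in the spirit of Figures~\ref{ZigzagTwoTiles} and~\ref{ReversingDoubleZigzag}: build the chain $M_0=A$, $M_{i+1}=g_{12}(M_i)$ along $\widehat{AM}\subseteq\mathcal D_1\cap\mathcal D_2$ together with the analogous chain induced by one of $g_{13},g_{23}$ on the same curve, and combine Lemma~\ref{finite} with the strict convexity of $K$ --- which forbids parallel outer normals at non-antipodal pairs of boundary points --- to contradict the required alternation of positive and negative cusp types along the two chains. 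Should this direct attack leave configurations open, a fallback is to use Lemma~\ref{intersection} together with an arc-length and total-curvature accounting on $\mathcal S_1$ and $\mathcal S_2$ to show that the assumed isometries must be compatible with a global symmetry of $K$, reducing once more to Theorem~\ref{thm:partial}. I expect this last step --- handling the fully asymmetric glide-reflection and rotation subcases --- to be the hardest, since without the involution $\tau_o$ the combinatorial bookkeeping of $g_{12},g_{13},g_{23}$ becomes considerably more delicate.
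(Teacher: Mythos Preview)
The statement you are attempting to prove is presented in the paper as an open \emph{conjecture}; the paper gives no proof of it. There is therefore nothing in the paper to compare your attempt against. Theorem~\ref{thm:partial} is the paper's partial result in this direction, and its proof already leaves the non--centrally-symmetric situation entirely open; the authors explicitly say they do not know whether topological case~III can occur at all.

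Your write-up is accordingly not a proof but a programme, and you are candid about this. The observations in your second paragraph are largely sound: in Cases~1, 2 and Subcase~3.a of the proof of Theorem~\ref{thm:partial}, central symmetry of $K$ is not genuinely used (in Case~1 one recovers axial symmetry of $K$ from $g_{12}(\mathcal S_1)=\mathcal S_2$, which follows because $g_{12}(\mathcal S_1)$ is strictly convex while $\mathcal D_1\cap\mathcal D_2$ lies on the line $AB$; in the rotation case with $g_{12}(\mathcal S_1)=\mathcal S_2$ one recovers central symmetry as you describe). But the substantive content of the conjecture lies precisely in Subcase~3.b and in Case~4 with $g_{12}(\mathcal S_1)\neq\mathcal S_2$, where the paper's argument relies essentially on the conjugation $\tau_o\circ g_{12}\circ\tau_o=g_{21}$. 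For these you offer only a heuristic (``zig-zag plus Lemma~\ref{finite} plus strict convexity'') together with a fallback (``arc-length and total-curvature accounting''), and you yourself flag that this is where the difficulty lies. Neither sketch is close to a proof: the zig-zag mechanism in the paper works because two independent chains on the \emph{same} curve are forced to interleave with incompatible cusp signs, and it is not clear how to set up the analogous pair of chains here when $g_{12}$ is a rotation and $g_{13},g_{23}$ are of mixed orientation type, nor why strict convexity alone should substitute for the global involution $\tau_o$. Until those two subcases are actually carried out, this remains a plan of attack on an open problem rather than a proof.
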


Differentiability was a main tool in the proofs. However it is unclear that apart from convexity which assumptions are essential to obtain similar results, see Figure~\ref{Nondiff}
for some examples. We challenge the reader to relax on the conditions of Theorem~\ref{three}.

\section{Acknowledgements}
K. Nagy was partially supported by the ÚNKP-21-1 New National Excellence Program of
the Ministry for Innovation and Technology from the source of the
National Research, Development and Innovation Fund.

V. V\'\i gh was partially supported by Hungarian NKFIH grant FK135392. 

This research was supported by grant NKFIH-1279-2/2020 of the Ministry for Innovation and Technology, Hungary.

\begin{bibdiv}
\begin{biblist}

\bib{ALMT}{article}{
    AUTHOR = {J. Adams},
    AUTHOR = {G. Lopez},
    AUTHOR = {C. Mann},
    AUTHOR = {N. Tran},
    TITLE = {Your {F}riendly {N}eighborhood {V}oderberg {T}ile},
    JOURNAL = {Math. Mag.},
    VOLUME = {93},
    YEAR = {2020},
    NUMBER = {2},
    PAGES = {83--90},
    ISSN = {0025-570X},
    DOI = {10.1080/0025570X.2020.1708685},
    URL = {https://doi.org/10.1080/0025570X.2020.1708685},
    }

\bib{BL21}{article}{
      title={On monohedral tilings of a regular polygon}, 
      author={Bushra Basit and Zsolt Lángi},
      year={2021},
      eprint={https://arxiv.org/abs/2109.14264},
}

\bib{croftfalconerguy}{book}{
   author={Croft, H. T.},
   author={Falconer, K. J.},
   author={Guy, R. K.},
   title={Unsolved problems in geometry},
   series={Problem Books in Mathematics},
   publisher={Springer-Verlag, New York},
   date={1994},
   pages={xvi+198},
   isbn={0-387-97506-3},
}

\bib{DifferentialGeometry}{book}{
   author = {Csikós, B.},
   title = {Differential Geometry},
   publisher = {Typotex},
   date={2014},
   isbn={ 978 963 279 221 7}
   }


\bib{Goncharov}{article}{%
   AUTHOR={Goncharov, S. V.},
   TITLE={On covering a ball by congruent subsets in normed spaces},
   journal={arXiv},
   year={2017},
   note={\url{https://arxiv.org/abs/1708.01598}},
   }
   
\bib{GSTiling}{book}{   
    AUTHOR = {B. Gr\"{u}nbaum},
    AUTHOR = {G. C. Shephard},
    TITLE = {Tilings and patterns},
    PUBLISHER = {W. H. Freeman and Company, New York},
    YEAR = {1987},
    PAGES = {xii+700},
    ISBN = {0-7167-1193-1},
}

\bib{HaddleyWorsley2016}{article}{%
   AUTHOR={Haddley, J. A.},
   AUTHOR={Worsley, S.},
   TITLE={Infinite families of monohedral disk tilings},
   journal={arXiv},
   year={2015},
   note= {\url{https://arxiv.org/abs/1512.03794}},
}

\bib{KLV2020}{article}{
    author={Á. Kurusa},
    author={Z. Lángi},
    author={V. Vígh},
    date={2020},
    title={Tiling a Circular Disc with Congruent Pieces},
    journal={Mediterr. J. Math.}, 
    volume={17},
    doi={10.1007/s00009-020-01595-3},
}

\bib{Maltby1994}{article}{
    author = {S. J. Maltby},
    title = {Trisecting a rectangle},
    journal = {Journal of Combinatorial Theory, Series A},
    volume = {66},
    number = {1},
    pages = {40-52},
    year = {1994},
    issn = {0097-3165},
    doi = {https://doi.org/10.1016/0097-3165(94)90049-3},
    url = {https://www.sciencedirect.com/science/article/pii/0097316594900493},
}

\bib{Mann2002}{article}{
    author = {Casey Mann},
    title = {A Tile with Surround Number 2},
    journal = {The American Mathematical Monthly},
    volume = {109},
    number = {4},
    pages = {383-388},
    year  = {2002},
    publisher = {Taylor & Francis},
    doi = {10.1080/00029890.2002.11920901},
    URL = { https://doi.org/10.1080/00029890.2002.11920901},
}

\bib{MASS}{article}{%
 AUTHOR={MASS Program at Penn State},
 TITLE ={About our Logo},
 note={\url{https://science.psu.edu/math/mass/content/about-our-logo.html}},
}

\bib{MathOverflow}{article}{%
   AUTHOR={Math Overflow},
   TITLE={Is it possible to dissect a disk into congruent pieces, so that a neighborhood  of the origin is contained within a single piece?},
  note={\url{https://mathoverflow.net/questions/17313}},
}

\bib{R2008}{article}{
    author={C. Richter},
    date={2008},
    title={Most Convex Bodies are Isometrically Indivisible},
    journal={J. Geom.}, 
    volume={8},
    pages={130--137},
    doi={10.1007/s00022-008-2033-0},
}

\bib{KoMaL}{article}{%
  author={M. Sal\'at},
  title={Cover Image},
  journal={K\"oz\'episkolai Matematikai \'es Fizikai Lapok},
  volume={51},
  editor={V. Ol\'ah},
  issn={1215-9247},
  language={Hungarian},
  year={2001},
  note={\url{http://db.komal.hu/scan/2001/01/B01011B.PS.png}},
}

\bib{YZZ2016}{article}{
    author = {L. Yuan},
    author = {C. Zamfirescu},
    author = {T. I. Zamfirescu},
    title = {Dissecting the square into five congruent parts},
    issn = {{0012-365X}},
    journal  = {Discrete Mathematics},
    number = {1},
    pages  = {288--298},
    doi = {10.1016/j.disc.2015.08.009},
    url  = {http://dx.doi.org/10.1016/j.disc.2015.08.009},
    volume = {339},
    year   = {2016},
}

\end{biblist}
\end{bibdiv}

\end{document}